\newtheorem{Th}{Theorem}[section]
\newtheorem{Cor}[Th]{Corollary}
\newtheorem{Lemma}[Th]{Lemma}
\theoremstyle{definition}
\newtheorem{Example}[Th]{Example}
\newtheorem{df}[Th]{Definition}
\theoremstyle{remark}
\newtheorem{Remark}[Th]{Remark}
\newcommand{\R}{\mathbb{R}}
\newcommand{\Z}{\mathbb{Z}}
\newcommand{\T}{\mathbb{T}}
\begin{document}

\title{Backward variational approach on time scales
with an action depending on the free endpoints\thanks{Submitted 17-Oct-2010; 
revised 18-Dec-2010; accepted 4-Jan-2011; 
for publication in \emph{Zeitschrift f\"{u}r Naturforschung A}}}

\author{Agnieszka B. Malinowska${}^{a}$\\
        \texttt{abmalinowska@ua.pt}
        \and
        Delfim F. M. Torres${}^{b, }$\thanks{Corresponding author.
        Email: delfim@ua.pt; Tel: +351 234370668; Fax: +351 234370066}\\
        \texttt{delfim@ua.pt}}

\date{${}^a$Faculty of Computer Science\\
Bia{\l}ystok University of Technology\\
15-351 Bia{\l}ystok, Poland\\[0.3cm]
${}^b$Department of Mathematics\\
University of Aveiro\\
3810-193 Aveiro, Portugal}

\maketitle


\begin{abstract}
We establish necessary optimality conditions for variational problems
with an action depending on the free endpoints.
New transversality conditions are also obtained.
The results are formulated and proved
using the recent and general theory
of time scales via the backward nabla differential operator.
\end{abstract}

\smallskip

\noindent \textbf{PACS 2010:}
02.30.Xx, 02.30.Yy.

\smallskip

\noindent \textbf{Mathematics Subject Classification 2000:}
49K05, 39A12.

\smallskip


\smallskip

\noindent \textbf{Keywords:}
calculus of variations,
transversality conditions,
time scales,
backward approach.


\section{Introduction}

Physics and Control on an arbitrary time scale is an area of strong
current research that unifies discrete, continuous, and quantum results and
generalize the theory to more complex domains \cite{MyID:162,BarPaw,withEwaP:avoidance}.
The new calculus on time scales has been applied, among others, in physics and control
of population, quantum calculus, economics, communication networks,
and robotic control (see \cite{SSW} and references therein). The
variational approach on time scales is  a fertile area
under strong current research
\cite{comNatyZbig,des:ts:cv,Tangier07,rui:ho,china-Xuzhou,comNatyBasia:infHorizon,MyID:183,comNataliaPoland07}.
In this paper we study problems in Lagrange form
with an action functional and a velocity vector
without boundary conditions $x(a)$ and $x(b)$.
The considered problems are more general
because of the dependence of the Hamiltonian on
$x(a)$ and $x(b)$. Such possibility is not covered
by the literature. Our study is
done using the nabla approach to time scales, which seems promising
with respect to applications (see, \textrm{e.g.},
\cite{iso,Atici:2006,Atici:2008}). This work is motivated by
the recent advancements obtained in \cite{withAlanZinober,Basia:post_doc_Aveiro:2}
about necessary optimality conditions for the
problem of the calculus of variations with a free endpoint $x(T)$
but whose Lagrangian depends explicitly on $x(T)$. Such problems
seem to have important implications in physical applications \cite{withAlanZinober}.
In contrast with \cite{withAlanZinober,Basia:post_doc_Aveiro:2},
we adopt here a backward perspective, which has proved useful,
and sometimes more natural and preferable, with respect to applications
\cite{iso,Atici:2006,Atici:2008,MT,MyID:177}.
The advantage of the backward approach here promoted becomes
evident when one considers that the
time scales analysis can also have important
implications for numerical analysts, who often prefer
backward differences rather than forward differences to
handle their computations due to practical implementation reasons
and also for better stability properties of implicit discretizations
\cite{MR2436490,MyID:177}.

The paper is organized as follows. Section~\ref{sec:prel} presents
the necessary definitions and concepts of the calculus on time
scales; our results are formulated, proved, and illustrated through
examples in Section~\ref{sec:mr}. Both Lagrangian
(Section~\ref{sec:mr:lag}) and Hamiltonian
(Section~\ref{sec:mr:ham}) approaches are considered. Main results
of the paper include necessary optimality conditions with new
transversality conditions (Theorems~\ref{main} and
\ref{hamiltonian}) that become sufficient under appropriate
convexity assumptions (Theorem~\ref{sc}).


\section{Time Scales Calculus}
\label{sec:prel}

For a general introduction to the calculus on time scales
we refer the reader to the books \cite{BP1,BP2}.
Here we only give those notions and results needed in the sequel.
In particular we are interested in the
backward nabla differential approach to time scales \cite{MyID:177}.
As usual $\mathbb{R}$,
$\mathbb{Z}$, and $\mathbb{N}$ denote, respectively, the set of
real, integer, and natural numbers.

A \emph{time scale} $\mathbb{T}$ is an arbitrary nonempty closed
subset of $\mathbb{R}$. Thus, $\mathbb{R}$, $\mathbb{Z}$, and
$\mathbb{N}$, are trivial examples of times scales. Other examples
of times scales are: $[-1,4] \bigcup \mathbb{N}$, $h\mathbb{Z}:=\{h
z | z \in \mathbb{Z}\}$ for some $h>0$, $q^{\mathbb{N}_0}:=\{q^k | k
\in \mathbb{N}_0\}$ for some $q>1$, and the Cantor set. We assume
that a time scale $\mathbb{T}$ has the topology that it inherits
from the real numbers with the standard topology.

The \emph{forward jump operator}
$\sigma:\mathbb{T}\rightarrow\mathbb{T}$ is defined by
$\sigma(t)=\inf{\{s\in\mathbb{T}:s>t}\}$ if $t\neq \sup \mathbb{T}$,
and $\sigma(\sup \mathbb{T})=\sup \mathbb{T}$. The \emph{backward
jump operator} $\rho:\mathbb{T}\rightarrow\mathbb{T}$ is defined by
$\rho(t)=\sup{\{s\in\mathbb{T}:s<t}\}$ if $t\neq \inf \mathbb{T}$,
and $\rho(\inf \mathbb{T})=\inf \mathbb{T}.$

A point $t\in\mathbb{T}$ is called \emph{right-dense},
\emph{right-scattered}, \emph{left-dense} and \emph{left-scattered}
if $\sigma(t)=t$, $\sigma(t)>t$, $\rho(t)=t$, and $\rho(t)<t$,
respectively. We say that $t$ is \emph{isolated} if
$\rho(t)<t<\sigma(t)$, that $t$ is \emph{dense} if
$\rho(t)=t=\sigma(t)$. The \emph{(backward) graininess function}
$\nu:\mathbb{T}\rightarrow[0,\infty)$ is defined by $\nu(t)=t -
\rho(t)$, for all $t\in\mathbb{T}$. Hence, for a given $t$, $\nu(t)$
measures the distance of $t$ to its left neighbor. It is clear that
when $\mathbb{T}=\mathbb{R}$ one has $\sigma(t)=t=\rho(t)$, and
$\nu(t)=0$ for any $t$. When $\mathbb{T}=\mathbb{Z}$,
$\sigma(t)=t+1$, $\rho(t)=t-1$, and $\nu(t)=1$ for any $t$.

In order to introduce the definition of nabla derivative, we define
a new set $\mathbb{T}_\kappa$ which is derived from $\mathbb{T}$ as
follows: if  $\mathbb{T}$ has a right-scattered minimum $m$, then
$\mathbb{T}_\kappa=\mathbb{T}\setminus\{m\}$; otherwise,
$\mathbb{T}_\kappa= \mathbb{T}.$

\begin{df}
We say that a function $f:\mathbb{T}\rightarrow\mathbb{R}$ is
\emph{nabla differentiable} at $t\in\mathbb{T}_\kappa$ if there is a
number $f^{\nabla}(t)$ such that for all $\varepsilon>0$ there
exists a neighborhood $U$ of $t$ (\textrm{i.e.},
$U=]t-\delta,t+\delta[\cap\mathbb{T}$ for some $\delta>0$) such that
$$|f(\rho(t))-f(s)-f^{\nabla}(t)(\rho(t)-s)|
\leq\varepsilon|\rho(t)-s|,\mbox{ for all $s\in U$}.$$ We call
$f^{\nabla}(t)$ the \emph{nabla derivative} of $f$ at $t$. Moreover,
we say that $f$ is \emph{nabla differentiable} on $\mathbb{T}$
provided $f^{\nabla}(t)$ exists for all $t \in \mathbb{T}_\kappa$.
\end{df}

\begin{Th}{\rm (Theorem~8.39 in \cite{BP1})}
\label{propriedades derivada} Let $\mathbb{T}$ be a time scale,
$f:\mathbb{T}\rightarrow\mathbb{R}$, and $t\in\mathbb{T}_\kappa$.
If $f$ is nabla differentiable at $t$, then $f$ is
continuous at $t$. If $f$ is continuous at $t$ and $t$ is left-scattered,
then $f$ is nabla differentiable at $t$ and
$f^{\nabla}(t)=\frac{f(t)-f(\rho(t))}{t-\rho(t)}$.
If $t$ is left-dense, then $f$ is nabla differentiable
at $t$ if and only if the limit
$\lim_{s\rightarrow t} \frac{f(t)-f(s)}{t-s}$
exists as a finite number. In this case,
$f^\nabla(t)=\lim_{s\rightarrow t} \frac{f(t)-f(s)}{t-s}$.
If $f$ is nabla differentiable at $t$, then
$f(\rho(t))=f(t)-\nu(t)f^\nabla(t)$.
\end{Th}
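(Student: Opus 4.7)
The plan is to prove the four assertions in an order that lets each one lean on the previous. I would begin with the reconstruction identity $f(\rho(t))=f(t)-\nu(t)f^\nabla(t)$, since it falls out of the definition almost for free and will be useful for the continuity argument. Putting $s=t$ into the defining inequality yields
$$|f(\rho(t))-f(t)+\nu(t)f^\nabla(t)|\le \varepsilon\,\nu(t)$$
for every $\varepsilon>0$; letting $\varepsilon\to 0$ gives the identity (trivially true also when $\nu(t)=0$).

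Next I would prove continuity by splitting on whether $t$ is left-dense or left-scattered. If $t$ is left-dense, then $\rho(t)=t$ and the defining inequality becomes $|f(t)-f(s)-f^\nabla(t)(t-s)|\le \varepsilon|t-s|$, so $|f(s)-f(t)|\le (|f^\nabla(t)|+\varepsilon)|t-s|\to 0$ as $s\to t$. If $t$ is left-scattered, I would shrink the neighborhood so that $\delta<\nu(t)$ (hence $\rho(t)\notin U$) and substitute the identity just obtained into the defining inequality; using $\nu(t)+\rho(t)=t$, this rearranges to $|f(t)-f(s)-f^\nabla(t)(t-s)|\le \varepsilon|\rho(t)-s|$. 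Since $|\rho(t)-s|$ stays bounded on $U$ while $|t-s|\to 0$, this forces $|f(s)-f(t)|$ to be arbitrarily small, giving continuity.

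For the explicit formula in the left-scattered case with $f$ continuous at $t$, set $L:=(f(t)-f(\rho(t)))/\nu(t)$ and verify the definition of $f^\nabla(t)=L$ directly. Choose $\delta<\nu(t)$ so that $U\cap\mathbb{T}$ contains no points strictly between $\rho(t)$ and $t$; then for $s\in U\cap\mathbb{T}$ one has either $s=t$, in which case the inequality reads $0\le 0$ by construction, or $s>t$, in which case a short algebraic manipulation rewrites the left-hand side as $|f(t)-f(s)+L(s-t)|$, while the denominator satisfies $|\rho(t)-s|\ge \nu(t)>0$. Continuity of $f$ at $t$ then makes the ratio $|f(t)-f(s)+L(s-t)|/\nu(t)$ arbitrarily small for $s$ close to $t$. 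For the left-dense case, $\rho(t)=t$ reduces the defining inequality to the $\varepsilon$--$\delta$ statement that $(f(t)-f(s))/(t-s)\to f^\nabla(t)$ as $s\to t$ in $\mathbb{T}$, from which both directions of the equivalence, and the value of $f^\nabla(t)$, are immediate.

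The only delicate point, I expect, is the left-scattered subcase of the continuity claim: one must feed the identity $f(\rho(t))=f(t)-\nu(t)f^\nabla(t)$ back into the defining inequality to express $|f(t)-f(s)|$, rather than try to control $|f(s)-f(\rho(t))|$ directly, because $s$ may approach $t$ from the right while $\rho(t)$ sits at a fixed positive distance $\nu(t)$ to the left of $t$. Everything else is either a direct epsilon chase or a substitution.
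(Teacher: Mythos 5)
Your proof is correct. Note that the paper itself does not prove this statement --- it is quoted verbatim as Theorem~8.39 of Bohner and Peterson's book --- so there is no in-paper argument to compare against; your argument is the standard one (establish the reconstruction identity $f(\rho(t))=f(t)-\nu(t)f^\nabla(t)$ first by setting $s=t$, then feed it back in for continuity and for the left-scattered difference-quotient formula), and all four parts check out, including the double-limit step in the left-scattered continuity case. The only cosmetic slip is that at $s=t$ in the left-scattered verification the defining inequality reads $0\le\varepsilon\,\nu(t)$ rather than $0\le 0$, which changes nothing.
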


\begin{Remark}
When $\mathbb{T}=\mathbb{R}$, then $f:\mathbb{R} \rightarrow
\mathbb{R}$ is nabla differentiable  at $t \in \mathbb{R}$ if and
only if $f^{\nabla}(t)= \lim_{s\rightarrow
t}\frac{f(t)-f(s)}{t-s}$ exists, \textrm{i.e.}, if and only if $f$
is differentiable at $t$ in the ordinary sense. When
$\mathbb{T}=\mathbb{Z}$, then $f:\mathbb{Z} \rightarrow \mathbb{R}$
is always nabla differentiable at $t \in \mathbb{Z}$ and
$f^{\nabla}(t)=\frac{f(t)-f(\rho(t))}{t-\rho(t)}= f(t)-f(t-1)=:\nabla
f(t)$, \textrm{i.e.}, $\nabla$ is the usual backward difference
operator defined by the last equation above. For any time scale
$\mathbb{T}$, when $f$ is a constant, then $f^{\nabla}=0$; if
$f(t)=k t$ for some constant $k$, then $f^{\nabla}=k$.
\end{Remark}

In order to simplify expressions, we denote the composition $f\circ
\rho$ by $f^{\rho}$.

\begin{Th}{\rm (Theorem~8.41 in \cite{BP1})}
Suppose $f,g:\mathbb{T}\rightarrow\mathbb{R}$ are nabla
differentiable at $t\in\mathbb{T}_\kappa$. Then,
the sum $f+g:\mathbb{T}\rightarrow\mathbb{R}$ is nabla
differentiable at $t$ and $(f+g)^{\nabla}(t)=f^{\nabla}(t) +
g^{\nabla}(t)$; for any constant $\alpha$, $\alpha
f:\mathbb{T}\rightarrow\mathbb{R}$ is nabla differentiable at $t$
and $(\alpha f)^{\nabla} (t)=\alpha f^{\nabla}(t)$;
the product $fg:\mathbb{T}\rightarrow\mathbb{R}$ is
nabla differentiable at $t$ and
$(fg)^{\nabla}(t) = f^{\nabla}(t)g(t) + f^{\rho}(t)g^{\nabla}(t)
= f^{\nabla}(t)g^{\rho}(t)+ f(t)g^{\nabla}(t)$.
\end{Th}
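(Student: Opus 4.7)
I would verify each claim directly from the $\varepsilon$-definition of the nabla derivative. For the linearity statements, given $\varepsilon>0$, nabla differentiability of $f$ and $g$ at $t$ supplies neighborhoods $U_1,U_2$ of $t$ on which $|f(\rho(t))-f(s)-f^{\nabla}(t)(\rho(t)-s)|\le \frac{\varepsilon}{2}|\rho(t)-s|$ and the corresponding estimate for $g$ hold. On $U=U_1\cap U_2$ the triangle inequality yields $|(f+g)(\rho(t))-(f+g)(s)-(f^{\nabla}(t)+g^{\nabla}(t))(\rho(t)-s)|\le \varepsilon|\rho(t)-s|$, which is the sum rule. The scalar-multiple rule is obtained in the same way: the case $\alpha=0$ is trivial, and if $\alpha\ne 0$ one applies the nabla estimate for $f$ with tolerance $\varepsilon/|\alpha|$.

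The substantive step is the Leibniz identity. The key trick is to insert $\pm f^{\rho}(t)g(s)$, yielding the algebraic decomposition
\[(fg)(\rho(t))-(fg)(s)=f^{\rho}(t)\bigl[g(\rho(t))-g(s)\bigr]+g(s)\bigl[f(\rho(t))-f(s)\bigr].\]
Applying the nabla estimates to the two bracketed differences replaces each by its linear part plus an error bounded by $\varepsilon|\rho(t)-s|$, so the right-hand side becomes $\bigl[f^{\nabla}(t)g(s)+f^{\rho}(t)g^{\nabla}(t)\bigr](\rho(t)-s) + R(s)$ with $|R(s)|\le\varepsilon\bigl(|f^{\rho}(t)|+|g(s)|\bigr)|\rho(t)-s|$. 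This is close to the target $(fg)^{\nabla}(t)=f^{\nabla}(t)g(t)+f^{\rho}(t)g^{\nabla}(t)$; what remains is to convert the coefficient $g(s)$ into $g(t)$.

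This last conversion is where I expect the main bookkeeping obstacle. The fix uses Theorem~\ref{propriedades derivada}: since $g$ is nabla differentiable at $t$ it is continuous there, so on a smaller neighborhood $|g(s)-g(t)|\le\varepsilon$ and also $|g(s)|\le|g(t)|+1$. The first bound absorbs the discrepancy $f^{\nabla}(t)[g(s)-g(t)](\rho(t)-s)$, while the second keeps the $g(s)$ factor in $R(s)$ controlled uniformly in $s$, so the overall error becomes a constant (depending only on $f^{\rho}(t)$, $g(t)$, and $f^{\nabla}(t)$) times $\varepsilon|\rho(t)-s|$; rescaling $\varepsilon$ at the outset produces the clean estimate. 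The second product formula $(fg)^{\nabla}(t)=f^{\nabla}(t)g^{\rho}(t)+f(t)g^{\nabla}(t)$ then follows by symmetry (since $fg=gf$, swap the roles of $f$ and $g$), or equivalently by running the same decomposition with the insertion $\pm f(s)g(\rho(t))$ in place of $\pm f^{\rho}(t)g(s)$.
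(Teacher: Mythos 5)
The paper does not prove this statement: it is imported verbatim as Theorem~8.41 of \cite{BP1}, so there is no in-paper proof to compare against. Your argument is correct and is essentially the canonical one from that reference (the nabla analogue of the delta product-rule proof): the decomposition obtained by inserting $\pm f^{\rho}(t)g(s)$ is the right one, and you correctly identify and handle the only delicate point, namely trading the coefficient $g(s)$ for $g(t)$ via the continuity of $g$ at $t$ (which the paper's Theorem~\ref{propriedades derivada} guarantees follows from nabla differentiability), with the uniform bound $|g(s)|\le|g(t)|+1$ keeping the error term controlled. The symmetry argument for the second form of the Leibniz rule is also fine. No gaps.
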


\begin{df}
Let $\mathbb{T}$ be a time scale,
$f:\mathbb{T}\rightarrow\mathbb{R}$. We say that function $f$ is
\emph{$\nu$-regressive} if $1-\nu(t)f(t)\neq 0$ for all $\in
\mathbb{T}_\kappa$.
\end{df}

\begin{df}
A function $F:\mathbb{T}\rightarrow\mathbb{R}$ is called a
\emph{nabla antiderivative} of $f:\mathbb{T}\rightarrow\mathbb{R}$
provided $F^{\nabla}(t)=f(t)$ for all $t \in \mathbb{T}_\kappa$.
In this case we define the \emph{nabla integral} of $f$ from $a$ to
$b$ ($a,b \in \mathbb{T}$) by $\int_{a}^{b}f(t)\nabla t:=F(b)-F(a)$.
\end{df}

In order to present a class of functions that possess a nabla
antiderivative, the following definition is introduced.

\begin{df}
Let $\mathbb{T}$ be a time scale,
$f:\mathbb{T}\rightarrow\mathbb{R}$. We say that function $f$ is
\emph{ld-continuous} if it is continuous at left-dense points
and its right-sided limits exist (finite) at all right-dense points.
\end{df}

\begin{Th}{\rm (Theorem~8.45 in \cite{BP1})}
\label{antiderivada} Every ld-continuous function has a nabla
antiderivative. In particular, if $a \in \mathbb{T}$, then the
function $F$ defined by
$F(t)= \int_{a}^{t}f(\tau)\nabla\tau$, $t \in \mathbb{T}$,
is a nabla antiderivative of $f$.
\end{Th}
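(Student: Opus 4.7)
The plan is to construct the antiderivative $F$ directly by a backward induction argument on $\mathbb{T}$, fixing $F(a)=0$ and extending the definition point by point. Every $t\in\mathbb{T}_\kappa$ is either left-scattered or left-dense, and the two cases are handled separately, after which the representation $F(t)=\int_{a}^{t}f(\tau)\nabla\tau$ will be immediate from the definition of the nabla integral.

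At a left-scattered $t$ with $\rho(t)$ already handled, Theorem~\ref{propriedades derivada} forces the definition
\[
F(t):=F(\rho(t))+\nu(t)\,f(t),
\]
which automatically yields $F^{\nabla}(t)=(F(t)-F(\rho(t)))/\nu(t)=f(t)$; ld-continuity plays no role here, only the finiteness of $f$. At a left-dense $t$ I would define $F(t)$ as the continuous extension from the left of the already-constructed values, and then verify nabla differentiability at $t$ using the limit criterion of Theorem~\ref{propriedades derivada}: by ld-continuity of $f$, given $\varepsilon>0$ there is a left neighborhood $U$ of $t$ on which $|f(\tau)-f(t)|<\varepsilon$, and the piecewise construction gives
\[
\Bigl|\frac{F(t)-F(s)}{t-s}-f(t)\Bigr|\le\sup_{\tau\in[s,t]\cap\mathbb{T}}|f(\tau)-f(t)|<\varepsilon
\]
for $s\in U$, so $F^{\nabla}(t)=\lim_{s\to t}(F(t)-F(s))/(t-s)=f(t)$.

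The main obstacle I expect is the book-keeping at a left-dense $t$ that accumulates infinitely many left-scattered points on its left: one has to verify that the inductive definition admits a continuous extension to $t$, i.e.\ that the partial sums $\sum\nu(s)f(s)$ over scattered ancestors below $t$ converge as $s\uparrow t$. This is precisely where ld-continuity is essential, since it gives local boundedness of $f$ near $t$ and allows one to invoke an induction principle on time scales (the nabla analogue of the standard tool in \cite{BP1}) to upgrade the local piecewise construction to a globally consistent continuous antiderivative. Uniqueness up to an additive constant, needed to make the notation $\int_a^t f(\tau)\nabla\tau$ unambiguous, then follows from a nabla mean-value inequality applied to the difference of any two antiderivatives.
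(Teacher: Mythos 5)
First, note that the paper offers no proof of this statement at all: it is quoted verbatim as Theorem~8.45 of \cite{BP1} and used as a black box, so there is no internal argument to compare yours against. Judged on its own merits, your proposal has a genuine gap, and it sits exactly where the theorem has its only real content. Your recursion defines $F$ at a point $t$ in two ways: from $F(\rho(t))$ when $t$ is left-scattered, and by ``continuous extension from the left of the already-constructed values'' when $t$ is left-dense. But the recursion never provides a rule for advancing past a right-dense point, so on any time scale containing a genuine interval the construction is vacuous. Take $\mathbb{T}=[0,1]$: every $t\in(0,1]$ is left-dense, the only already-constructed value to the left of any such $t$ is $F(0)=0$, and ``continuous extension from the left'' determines nothing. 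In this case the theorem \emph{is} the classical existence of antiderivatives of continuous functions, which cannot be obtained by pointwise recursion; one needs an actual integral (Riemann/Darboux sums, the Hilger--Cauchy integral, or the existence of pre-antiderivatives of regulated functions, which is how \cite{BP1} proceeds before specializing to ld-continuous $f$). Your argument handles correctly precisely the discrete part of the time scale, where the statement is trivial, and assumes the hard part. The appeal to ``an induction principle on time scales'' does not rescue this: that principle certifies that a \emph{statement} propagates across dense portions, but it does not manufacture the function whose existence is being asserted, and the step it would require at a right-dense point is exactly the missing local existence of an antiderivative there.

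Two smaller points. The displayed estimate
\[
\Bigl|\frac{F(t)-F(s)}{t-s}-f(t)\Bigr|\le\sup_{\tau\in[s,t]\cap\mathbb{T}}|f(\tau)-f(t)|
\]
already presupposes the additivity $F(t)-F(s)=\int_{s}^{t}f(\tau)\nabla\tau$ together with a mean-value inequality for the integral, i.e.\ the very objects under construction; and at a point that is both left- and right-dense the nabla-derivative criterion of Theorem~\ref{propriedades derivada} requires the two-sided limit, so you must also control $s>t$ (this does work, since ld-continuity gives full continuity of $f$ at left-dense points, but it should be said). Your concern about convergence of $\sum\nu(s)f(s)$ over scattered points accumulating at a left-dense $t$ is well placed and is fixable via local boundedness, but boundedness of $f$ on all of $[a,t]$ (not just near $t$) is itself a lemma about regulated functions that needs the induction principle. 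The honest route is the one in \cite{BP1}: first establish that every regulated function possesses a pre-antiderivative (the deep step), then use ld-continuity to upgrade the pre-antiderivative to a genuine nabla antiderivative, and finally get uniqueness up to a constant from the mean value theorem, which justifies the notation $\int_{a}^{t}f(\tau)\nabla\tau$.
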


The set of all ld-continuous functions
$f:\mathbb{T}\rightarrow\mathbb{R}$ is denoted by
$C_{\textrm{ld}}(\mathbb{T}, \mathbb{R})$, and the set of all nabla
differentiable functions with ld-continuous derivative by
$C_{\textrm{ld}}^1(\mathbb{T}, \mathbb{R})$.

\begin{Th}{\rm (Theorem~8.46 in \cite{BP1})}
\label{propriedade integral} If $f \in C_{\textrm{ld}}(\mathbb{T},
\mathbb{R})$ and $t \in \mathbb{T}_\kappa$, then
$\int_{\rho(t)}^{t}f(\tau)\nabla\tau=\nu(t)f(t)$.
\end{Th}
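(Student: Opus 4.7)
The plan is to combine three earlier results from the excerpt: the existence of a nabla antiderivative for every ld-continuous function (Theorem~\ref{antiderivada}), the definition of the nabla integral in terms of such an antiderivative, and the identity $f(\rho(t)) = f(t) - \nu(t) f^{\nabla}(t)$ from the last sentence of Theorem~\ref{propriedades derivada}.

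Concretely, since $f \in C_{\textrm{ld}}(\mathbb{T}, \mathbb{R})$, I would invoke Theorem~\ref{antiderivada} to obtain a function $F: \mathbb{T} \to \mathbb{R}$ satisfying $F^{\nabla}(\tau) = f(\tau)$ for every $\tau \in \mathbb{T}_\kappa$. By the definition of the nabla integral one then has $\int_{\rho(t)}^{t} f(\tau)\nabla\tau = F(t) - F(\rho(t))$. Applying the identity $F(\rho(t)) = F(t) - \nu(t) F^{\nabla}(t)$ to the antiderivative $F$ (which is nabla differentiable at $t$ by construction) and substituting $F^{\nabla}(t) = f(t)$ gives $F(t) - F(\rho(t)) = \nu(t) f(t)$, which is exactly the claimed equality.

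It is worth observing that the formula is consistent with the dichotomy for $t$: if $t$ is left-dense, then $\rho(t) = t$ and $\nu(t) = 0$, so both sides vanish trivially; if $t$ is left-scattered, then $\nu(t) = t - \rho(t) > 0$ and the result reads as the natural one-step quadrature rule $\int_{\rho(t)}^{t} f(\tau)\nabla\tau = (t - \rho(t)) f(t)$. The unified argument via the antiderivative covers both cases simultaneously, so no case split is required.

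I do not anticipate a genuine obstacle here: all the heavy lifting — the existence of a nabla antiderivative for ld-continuous integrands and the $\rho$-evaluation identity for nabla-differentiable functions — is imported from Theorems~\ref{propriedades derivada} and \ref{antiderivada}. The proof is essentially a one-line composition of these two facts together with the definition of $\int \cdot \, \nabla \tau$. The only thing to be slightly careful about is ensuring that the antiderivative $F$ is indeed nabla differentiable at the point $t \in \mathbb{T}_\kappa$ where the identity is applied, but this is guaranteed precisely by the definition of antiderivative on $\mathbb{T}_\kappa$.
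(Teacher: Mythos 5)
Your argument is correct: the paper itself gives no proof of this statement (it is quoted verbatim from Theorem~8.46 of \cite{BP1}), and your derivation --- take a nabla antiderivative $F$ of $f$ via Theorem~\ref{antiderivada}, write $\int_{\rho(t)}^{t}f(\tau)\nabla\tau=F(t)-F(\rho(t))$ by the definition of the nabla integral, and conclude with the identity $F^{\rho}(t)=F(t)-\nu(t)F^{\nabla}(t)$ from Theorem~\ref{propriedades derivada} --- is exactly the standard proof in that reference. No gaps; the closing remark about the left-dense/left-scattered dichotomy is a nice sanity check but, as you say, not needed.
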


\begin{Th}{\rm (Theorem~8.47 in \cite{BP1})}
\label{propriedades nabla integral} If $a$, $b$,
$c \in \mathbb{T}$, $a
\le c \le b$, $\alpha \in \mathbb{R}$, and $f,g \in
C_{\textrm{ld}}(\mathbb{T}, \mathbb{R})$, then
$\int_{a}^{b}\left(f(t) + g(t) \right)
    \nabla t= \int_{a}^{b}f(t)\nabla t +
    \int_{a}^{b}g(t)\nabla t$;
$\int_{a}^{b} \alpha f(t)\nabla t =\alpha
    \int_{a}^{b}f(t)\nabla t$;
$\int_{a}^{b}  f(t)\nabla t = - \int_{b}^{a} f(t)\nabla t$;
$\int_{a}^{a}  f(t)\nabla t=0$;
$\int_{a}^{b}  f(t)\nabla t = \int_{a}^{c}  f(t)\nabla t + \int_{c}^{b} f(t)\nabla t$.
If $f(t)> 0$ for all $a < t \leq b$, then
$\int_{a}^{b}  f(t)\nabla t > 0$;
$\int_{a}^{b}f^\rho(t)g^{\nabla}(t)\nabla t
=\left[(fg)(t)\right]_{t=a}^{t=b}-\int_{a}^{b}f^{\nabla}(t)g(t)\nabla t$;
$\int_{a}^{b}f(t)g^{\nabla}(t)\nabla t
=\left[(fg)(t)\right]_{t=a}^{t=b}-\int_{a}^{b}f^{\nabla}(t)g^\rho(t)\nabla t$.
\end{Th}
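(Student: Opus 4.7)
The plan is to reduce every assertion to the corresponding property of the nabla derivative, exploiting the fact that, by Theorem~\ref{antiderivada}, each ld-continuous $f$ actually admits an antiderivative $F$ and hence $\int_a^b f(t)\,\nabla t = F(b)-F(a)$ is a genuine telescoping expression. First I would dispose of all the purely algebraic items. Linearity in the integrand follows immediately because, by the sum and constant-multiple rules of the preceding theorem on $\nabla$-differentiation, $F+G$ and $\alpha F$ are antiderivatives of $f+g$ and $\alpha f$ respectively, so both sides agree after evaluating at the endpoints. The identities $\int_a^a f\,\nabla t=0$, $\int_a^b f\,\nabla t=-\int_b^a f\,\nabla t$, and the interval-additivity $\int_a^b=\int_a^c+\int_c^b$ are then trivial rewrites of $F(a)-F(a)$, $F(b)-F(a)=-(F(a)-F(b))$, and $F(b)-F(a)=(F(c)-F(a))+(F(b)-F(c))$.

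Next, for the two integration-by-parts formulas, the plan is to apply the nabla product rule from Theorem~8.41, which gives the two representations
\begin{equation*}
(fg)^\nabla(t)=f^\nabla(t)g(t)+f^\rho(t)g^\nabla(t)=f^\nabla(t)g^\rho(t)+f(t)g^\nabla(t).
\end{equation*}
Since $(fg)^\nabla$ is ld-continuous under the hypotheses of the theorem, integrating each identity from $a$ to $b$ and using the already-established linearity together with the fundamental evaluation $\int_a^b(fg)^\nabla(t)\,\nabla t=[(fg)(t)]_{t=a}^{t=b}$ yields both stated formulas after a single rearrangement.

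The main obstacle is the positivity assertion: from $f(t)>0$ on $(a,b]$ one must deduce $\int_a^b f(t)\,\nabla t>0$. Unlike the algebraic items, this is not a formal consequence of being an antiderivative, and it really uses the structure of $\mathbb{T}$. My approach would be to split the contribution locally. At every left-scattered point $t\in(a,b]$, Theorem~\ref{propriedade integral} gives $\int_{\rho(t)}^{t} f(\tau)\,\nabla\tau=\nu(t)f(t)>0$, so each scattered jump contributes strictly positively. On the complementary, left-dense part, one can mimic the classical argument: if $F^\nabla=f>0$ on an interval of dense points, then by the characterization of $f^\nabla$ at dense points in Theorem~\ref{propriedades derivada}, $F$ is strictly increasing there, so its increment across any such sub-interval is positive. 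Combining the scattered-point contributions with the dense-interval increments through the additivity already proved yields $\int_a^b f(t)\,\nabla t>0$. I expect the bookkeeping for this decomposition, rather than any single estimate, to be the delicate point, since a general time scale may interleave infinitely many scattered and dense pieces and one has to invoke ld-continuity to control the behaviour near accumulation points of scattered atoms.
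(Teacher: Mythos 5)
This theorem is quoted by the paper from Theorem~8.47 of Bohner and Peterson's book \cite{BP1}; the paper itself supplies no proof, so there is nothing internal to compare against. Judged on its own terms, your plan for the algebraic items and for the two integration-by-parts formulas is sound and is the standard route: linearity, antisymmetry, $\int_a^a=0$ and interval additivity are immediate telescoping consequences of the definition $\int_a^b f\,\nabla t=F(b)-F(a)$, and the two parts formulas follow by integrating the two forms of the nabla product rule and invoking the evaluation $\int_a^b (fg)^\nabla(t)\,\nabla t=[(fg)(t)]_{t=a}^{t=b}$. (A small caveat: those last two items tacitly require $f,g\in C^1_{\mathrm{ld}}$ rather than merely $C_{\mathrm{ld}}$, but that is an imprecision in the statement as transcribed, not in your argument.)

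The genuine gap is in the positivity assertion. Your proposed decomposition of $(a,b]$ into left-scattered jumps plus ``intervals of dense points'' does not exist for a general time scale: the left-scattered points may accumulate everywhere and the left-dense points may form a totally disconnected set (e.g.\ for $\mathbb{T}$ built from the Cantor set), so there is no finite or even countable partition into pieces of the two types to which interval additivity could be applied. Moreover, even on the dense part, pointwise positivity of $F^\nabla$ at left-dense points only yields \emph{local} increase of $F$ near each such point; promoting this to a global inequality on a disconnected set cannot be done by ``mimicking the classical argument,'' because the classical local-to-global step uses connectedness of the domain. The missing ingredient is a monotonicity lemma for antiderivatives --- if $F^\nabla(t)\ge 0$ for all $t\in(a,b]$ then $F$ is nondecreasing on $[a,b]$ --- which on time scales is proved via the induction principle (or a mean value theorem), not by decomposition. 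With that lemma, the argument closes cleanly: $f\ge 0$ gives $F(b)\ge F(s)\ge F(a)$ for all $s\in[a,b]$, and strictness follows from a single point, namely $t=b$: if $b$ is left-scattered then $\int_{\rho(b)}^{b}f(\tau)\,\nabla\tau=\nu(b)f(b)>0$ by Theorem~\ref{propriedade integral}, while if $b$ is left-dense then $F^\nabla(b)=f(b)>0$ forces $F(s)<F(b)$ for $s<b$ close to $b$, and in either case $F(b)-F(a)>0$. You correctly flagged the bookkeeping as the delicate point, but the plan as written does not contain the tool that actually resolves it.
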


\begin{Remark}
Let $a, b \in \mathbb{T}$ and $f\in C_{ld}(\mathbb{T}, \mathbb{R})$.
For $\mathbb{T}=\mathbb{R}$, then $\int_{a}^{b}f(t)\nabla t =
\int_{a}^{b}f(t) dt$, where the integral on the right side is the
usual Riemann integral. For $\mathbb{T}=\mathbb{Z}$, then
$\displaystyle \int_{a}^{b}f(t)\nabla t = \sum_{t=a+1}^{b}f(t)$ if
$a<b$, $\displaystyle \int_{a}^{b}f(t)\nabla t=0$ if $a=b$, and
$\displaystyle \int_{a}^{b}f(t)\nabla t = - \sum_{t=b+1}^{a}f(t)$ if
$a>b$.
\end{Remark}

Let $a, b \in \mathbb{T}$ with $a<b$. We define the interval $[a,b]$
in $\mathbb{T}$ by
$[a,b]:=\{t \in \mathbb{T}: a \leq t \leq b\}$.
Open intervals and half-open intervals in $\mathbb{T}$ are defined
accordingly. Note that $[a,b]_\kappa=[a,b]$ if $a$ is right-dense
and $[a,b]_\kappa=[\sigma(a),b]$ if $a$ is right-scattered.

\begin{Lemma}{\rm (\cite{MT})}
\label{lemma4}
Let $f, g \in C_{ld}([a,b], \mathbb{R})$. If
$\int_{a}^{b}\left(f(t)\eta^\rho(t) + g(t)
\eta^{\nabla}(t)\right)\nabla t=0$
for all  $\eta \in C^1_{ld}([a,b], \mathbb{R})$ such that
$\eta(a)=\eta(b)=0$, then $g$  is nabla differentiable and
$g^{\nabla}(t)=f(t) \ \ \ \forall t\in [a,b]_\kappa$.
\end{Lemma}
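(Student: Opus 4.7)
The plan is to adapt the classical du Bois-Reymond argument to the nabla calculus: first reduce the hypothesis to a single integral involving only $\eta^\nabla$ by integration by parts, and then exhibit a distinguished test function $\eta$ that forces a squared integrand to have vanishing nabla integral.

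Since $f\in C_{\mathrm{ld}}([a,b],\mathbb{R})$, Theorem~\ref{antiderivada} supplies a nabla antiderivative $F(t):=\int_a^t f(\tau)\nabla\tau$ with $F^\nabla=f$. Applying the second form of the product rule (from Theorem~8.41 in the excerpt), $(F\eta)^\nabla(t)=f(t)\eta^\rho(t)+F(t)\eta^\nabla(t)$, and integrating from $a$ to $b$ while using $\eta(a)=\eta(b)=0$, I obtain
\[
\int_a^b f(t)\eta^\rho(t)\,\nabla t=-\int_a^b F(t)\eta^\nabla(t)\,\nabla t.
\]
Substituting into the hypothesis reduces the identity to
\[
\int_a^b \bigl(g(t)-F(t)\bigr)\eta^\nabla(t)\,\nabla t=0 \quad\text{for every admissible }\eta.
\]

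Next I would construct a specific $\eta$ from $g-F$ itself. Set $c:=\frac{1}{b-a}\int_a^b (g(t)-F(t))\,\nabla t$ and define $\eta_*(t):=\int_a^t (g(\tau)-F(\tau)-c)\,\nabla\tau$. Because $g-F-c$ is ld-continuous, $\eta_*\in C^1_{\mathrm{ld}}([a,b],\mathbb{R})$ with $\eta_*^\nabla=g-F-c$, and the choice of $c$ gives $\eta_*(a)=\eta_*(b)=0$ (here I use that the nabla antiderivative of the constant $c$ is $ct$, so $\int_a^b c\,\nabla t=c(b-a)$). Since $\int_a^b c\,\eta_*^\nabla\,\nabla t=c[\eta_*]_a^b=0$, plugging $\eta_*$ into the reduced identity yields
\[
\int_a^b \bigl(g(t)-F(t)-c\bigr)^{2}\,\nabla t=0.
\]

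The final step is to conclude from this that $h:=g-F-c\equiv 0$ on $[a,b]$, which immediately gives $g^\nabla=F^\nabla=f$ on $[a,b]_\kappa$. This is where the main delicacy lies: the strict positivity clause of Theorem~\ref{propriedades nabla integral} requires pointwise positivity on all of $(a,b]$, whereas I only know $h(t_0)\ne 0$ at some point. I would resolve this by a case split on the nature of $t_0\in(a,b]$: if $t_0$ is left-scattered, Theorem~\ref{propriedade integral} gives the isolated positive contribution $\int_{\rho(t_0)}^{t_0}h^2\,\nabla t=\nu(t_0)h^2(t_0)>0$; if $t_0$ is left-dense, ld-continuity produces a left-neighborhood on which $h^2>0$, and applying the strict positivity clause on that subinterval yields a positive contribution. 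In either case the full integral $\int_a^b h^2\,\nabla t$ must be strictly positive, contradicting the previous display. Hence $g=F+c$ throughout $[a,b]$ and the conclusion follows.
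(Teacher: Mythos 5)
The paper itself contains no proof of Lemma~\ref{lemma4} --- it is quoted from \cite{MT} --- so there is no internal argument to compare yours against. That said, your proposal is precisely the route taken in the cited source: reduce to $\int_a^b\bigl(g(t)-F(t)\bigr)\eta^{\nabla}(t)\,\nabla t=0$ via the product rule and the fundamental theorem (this is literally the last integration-by-parts formula of Theorem~\ref{propriedades nabla integral} with the pair $(\eta,F)$), then test against $\eta_*(t)=\int_a^t\bigl(g(\tau)-F(\tau)-c\bigr)\nabla\tau$ with $c$ the mean value, and force the square to vanish. The admissibility of $\eta_*$, the computation $\int_a^b c\,\eta_*^{\nabla}\,\nabla t=0$, and the left-scattered/left-dense dichotomy for extracting pointwise vanishing from $\int_a^b h^2\,\nabla t=0$ are all sound; the only routine point you leave implicit is that the complementary pieces $\int_a^{t_1}h^2\,\nabla t$ and $\int_{t_0}^{b}h^2\,\nabla t$ are nonnegative, which follows from the strict positivity clause applied to $h^2+\varepsilon$ and letting $\varepsilon\to 0$.

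The one genuine overreach is the closing claim that $h\equiv 0$ \emph{throughout} $[a,b]$. Your case split ranges over $t_0\in(a,b]$ only, and indeed the hypothesis carries no information about the value $g(a)$: by Theorem~\ref{propriedade integral} the nabla integral over $[a,b]$ never samples the integrand at $t=a$. So you obtain $g=F+c$ only on $(a,b]$, which gives $g^{\nabla}(t)=f(t)$ at every $t$ whose nabla derivative does not involve $g(a)$, but \emph{not} at $t=\sigma(a)$ when $a$ is right-scattered, since there $g^{\nabla}(\sigma(a))=\bigl(g(\sigma(a))-g(a)\bigr)/\nu(\sigma(a))$. This cannot be patched: on $\mathbb{T}=\mathbb{Z}$ with $[a,b]=\{0,1,2,3\}$, $f\equiv 0$, $g(0)=1$, $g(1)=g(2)=g(3)=0$, the hypothesis holds (the sum $\sum_{t=1}^{3}g(t)(\eta(t)-\eta(t-1))$ never sees $g(0)$), yet $g^{\nabla}(1)=-1\neq f(1)$ although $1=\sigma(0)\in[0,3]_{\kappa}$. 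In other words, the soft spot in your proof coincides with a soft spot in the statement as quoted; your argument establishes everything that is actually true (the conclusion at all $t\in(a,b]$ with $\rho(t)>a$, and at all left-dense $t\in(a,b]$), and the honest fix is to weaken the final claim rather than to look for a missing estimate at $t=a$.
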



\section{Main Results}
\label{sec:mr}

Throughout we let $A,B\in \T$ with $A<B$.
Now let $[a,b]$ be a subinterval of $[A,B]$,
with $a,b\in \T$ and $A<a$. The problem of
the calculus of variations on time scales under our consideration
consists of minimizing or maximizing
\begin{equation}\label{vp}
\mathcal{L}[x]=\int_{a}^{b}f(t,x^{\rho}(t),x^{\nabla}(t),x(a),
x(b))\nabla t , \quad (x(a)=x_{a}), \quad (x(b)=x_{b})
\end{equation}
over all $x\in C^{1}_{ld}([A,b],\R)$. Using parentheses around the
endpoint conditions means that the conditions may or may not be
present. We assume that $f(t,x,v,z,s):[A,b]\times \R^{4} \rightarrow
\R$ has partial continuous derivatives with respect to $x,v,z,s$ for
all $t\in[A,b]$, and $f(t,\cdot,\cdot,\cdot,\cdot)$ and its partial
derivatives are ld-continuous for all $t\in[A,b]$.

A function $x\in C^{1}_{ld}([A,b],\R)$ is said to be an admissible
function provided that it satisfies the endpoints conditions (if
any is given).
Let us consider the following norm in $C^{1}_{ld}([A,b],\R)$:
$\|x\|_{1}=\sup_{t\in[A,b]}\left|x^{\rho}(t)\right|
   +\sup_{t\in[A,b]}\left|x^{\nabla}(t)\right|$.

\begin{df}
An admissible function $\tilde{x}$ is said to be a \emph{weak local
minimizer} (respectively \emph{weak local maximizer}) for \eqref{vp}
if there exists $\delta
>0$ such that $\mathcal{L}[\tilde{x}]\leq \mathcal{L}[x]$
(respectively $\mathcal{L}[\tilde{x}] \geq \mathcal{L}[x]$)
for all admissible $x$ with $\|x-\tilde{x}\|_{1}<\delta$.
\end{df}


\subsection{Lagrangian approach}
\label{sec:mr:lag}

Next theorem gives necessary optimality conditions for the problem
\eqref{vp}.

\begin{Th}
\label{main} If $\tilde{x}$ is an extremizer
(\textrm{i.e.}, a weak local
minimizer or a weak local maximizer) for the problem \eqref{vp},
then
\begin{equation}\label{Euler}
f_{x^{\nabla}}^{\nabla}(t,\tilde{x}^{\rho}(t),\tilde{x}^{\nabla}(t),\tilde{x}(a),\tilde{x}(b))=
f_{x^{\rho}}(t,\tilde{x}^{\rho}(t),\tilde{x}^{\nabla}(t),\tilde{x}(a),\tilde{x}(b))
\end{equation}
for all $t \in [a,b]_{\kappa}$. Moreover, if $x(a)$ is not
specified, then
\begin{equation}\label{new:bcb}
f_{x^{\nabla}}(a,\tilde{x}^{\rho}(a),\tilde{x}^{\nabla}(a),\tilde{x}(a),\tilde{x}(b))
=\int_{a}^{b}f_{z}(t,\tilde{x}^{\rho}(t),\tilde{x}^{\nabla}(t),
\tilde{x}(a),\tilde{x}(b))\nabla t;
\end{equation}
if $x(b)$ is not specified, then
\begin{equation}\label{new:bce}
f_{x^{\nabla}}(b,\tilde{x}^{\rho}(b),\tilde{x}^{\nabla}(b),\tilde{x}(a),\tilde{x}(b))
=-\int_{a}^{b}f_{s}(t,\tilde{x}^{\rho}(t),\tilde{x}^{\nabla}(t),
\tilde{x}(a),\tilde{x}(b))\nabla t.
\end{equation}
\end{Th}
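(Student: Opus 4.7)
The plan is to use the standard first-variation argument from the calculus of variations, adapted to the nabla setting, and to exploit Lemma~\ref{lemma4} for the interior equation together with suitably chosen boundary-free variations for the two transversality conditions.

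First I would embed the candidate extremizer $\tilde{x}$ in a one-parameter family $x_\varepsilon = \tilde{x} + \varepsilon \eta$, where $\eta \in C^1_{\text{ld}}([A,b],\R)$ is an admissible variation, meaning $\eta(a)=0$ whenever $x(a)$ is fixed and $\eta(b)=0$ whenever $x(b)$ is fixed. Setting $\phi(\varepsilon)=\mathcal{L}[x_\varepsilon]$ and using the assumed smoothness of $f$ to differentiate under the nabla integral, the condition $\phi'(0)=0$ gives
\begin{equation*}
\int_{a}^{b}\!\Bigl[f_{x^{\rho}}\,\eta^{\rho} + f_{x^{\nabla}}\,\eta^{\nabla} + f_{z}\,\eta(a) + f_{s}\,\eta(b)\Bigr]\nabla t = 0,
\end{equation*}
where every partial derivative of $f$ is evaluated along $(t,\tilde{x}^{\rho}(t),\tilde{x}^{\nabla}(t),\tilde{x}(a),\tilde{x}(b))$. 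Since $\eta(a)$ and $\eta(b)$ are constants relative to $t$, they factor out of their respective integrals.

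Next I would restrict to variations with $\eta(a)=\eta(b)=0$. The functional identity reduces to $\int_a^b [f_{x^{\rho}}\eta^{\rho}+f_{x^{\nabla}}\eta^{\nabla}]\nabla t=0$, and Lemma~\ref{lemma4} applied with $f\leftarrow f_{x^{\rho}}$ and $g\leftarrow f_{x^{\nabla}}$ yields directly the Euler--Lagrange equation \eqref{Euler} on $[a,b]_\kappa$. For the transversality conditions I would return to the general expression for $\phi'(0)$ and apply the nabla integration-by-parts formula from Theorem~\ref{propriedades nabla integral} in the form $\int_a^b f_{x^{\nabla}}\eta^{\nabla}\nabla t = [f_{x^{\nabla}}\eta]_a^b - \int_a^b f_{x^{\nabla}}^{\nabla}\eta^{\rho}\nabla t$. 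Substituting and using \eqref{Euler} to kill the remaining integral, the optimality condition collapses to
\begin{equation*}
\eta(b)\Bigl[f_{x^{\nabla}}(b,\cdot)+\int_a^b f_s\,\nabla t\Bigr] + \eta(a)\Bigl[-f_{x^{\nabla}}(a,\cdot)+\int_a^b f_z\,\nabla t\Bigr] = 0.
\end{equation*}

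Finally, when $x(a)$ is free I would pick an admissible $\eta$ with $\eta(b)=0$ and $\eta(a)$ arbitrary (e.g.\ any smooth extension that vanishes at $b$), which isolates the bracket multiplying $\eta(a)$ and yields \eqref{new:bcb}; the symmetric choice $\eta(a)=0$ with $\eta(b)$ arbitrary yields \eqref{new:bce}. The main technical care, and the step I expect to be the most delicate, is the integration by parts: one must use exactly the variant that pairs $f_{x^{\nabla}}$ (not its $\rho$-shift) with $\eta^{\nabla}$, so that the resulting interior integrand involves $\eta^{\rho}$ and matches the hypothesis of Lemma~\ref{lemma4}; using the wrong variant would produce a mismatched $\rho$-shift and prevent the application of the fundamental lemma. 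A secondary but routine concern is constructing, on the enlarged domain $[A,b]$, variations $\eta$ whose boundary values at $a$ and $b$ can be prescribed independently while keeping $\eta\in C^1_{\text{ld}}$, which poses no obstacle since such functions can be built by standard piecewise-linear (or smoother) interpolation on the time scale.
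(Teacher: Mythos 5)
Your proposal is correct and follows essentially the same route as the paper: the same variation $\tilde{x}+\varepsilon\eta$, the same first-variation identity, Lemma~\ref{lemma4} for the Euler--Lagrange equation, and the same nabla integration by parts (the variant producing $\eta^{\rho}$ in the interior integral) to isolate the boundary terms and read off \eqref{new:bcb} and \eqref{new:bce}. The only difference is cosmetic ordering -- you apply Lemma~\ref{lemma4} before integrating by parts, the paper after -- which changes nothing of substance.
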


\begin{proof}
Suppose that $L$ has a weak local extremum at $\tilde{x}$. We can
proceed as Lagrange did, by considering the value of $L$ at a nearby
function $x = \tilde{x} + \varepsilon h$, where $\varepsilon\in \R$
is a small parameter, $h \in C^{1}_{ld}([A,b],\R)$. We do not
require $h(a)=0$ or $h(b)=0$ in case $x(a)$ or $x(b)$, respectively,
is free (it is possible that both are free). Let
\begin{equation*}
\begin{split}
\phi(\varepsilon)
&= L[(\tilde{x} + \varepsilon h)(\cdot)]\\
&= \int_a^b f(t,\tilde{x}^{\rho}(t)+\varepsilon
h(t),\tilde{x}^{\nabla}(t)+ \varepsilon
h^{\nabla}(t),\tilde{x}(a)+\varepsilon h(a),\tilde{x}(b)
+\varepsilon h(b)) \nabla t.
\end{split}
\end{equation*}
A necessary condition for $\tilde{x}$ to be an extremizer is given
by
\begin{multline}
\label{eq:FT} \left.\phi'(\varepsilon)\right|_{\varepsilon=0} = 0\\
\Leftrightarrow \int_a^b \Bigl[ f_{x^{\rho}}(\cdots) h^{\rho}(t) +
f_{x^{\nabla}}(\cdots) h^{\nabla}(t) + f_z(\cdots) h(a) +f_s(\cdots)
h(b)\Bigr]\triangle t = 0 \, ,
\end{multline}
where $(\cdots) =
\left(t,\tilde{x}^{\rho}(t),\tilde{x}^{\nabla}(t),\tilde{x}(a),
\tilde{x}(b)\right)$. Integration by parts gives
\begin{multline}
\label{eq:aft:IP}
0= \int_a^b
\left(f_{x^{\rho}}(\cdots)-f_{x^{\nabla}}^{\nabla}(\cdots)\right)h^{\rho}(t)
\nabla t
+h(b)\left(f_{x^{\nabla}}(\cdots)|_{t=b}+\int_a^b
f_{s}(\cdots)\nabla t \right)\\
+h(a)\left(-f_{x^{\nabla}}(\cdots)|_{t=a}+\int_a^b f_{z}(\cdots)\nabla t \right).
\end{multline}
We first consider functions $h(t)$ such that $h(a) =h(b)=0$. Then,
by Lemma~\ref{lemma4}, we have
\begin{equation}
\label{eq:EL} f_{x^{\rho}}(\cdots)-f_{x^{\nabla}}^{\nabla}(\cdots)=0
\end{equation}
for all $t\in[a,b]_{\kappa}$. Therefore, in order for $\tilde{x}$ to
be an extremizer for the problem \eqref{vp}, $\tilde{x}$ must be a
solution of the nabla differential Euler-Lagrange equation. But if
$\tilde{x}$ is a solution of \eqref{eq:EL}, the first integral in
expression \eqref{eq:aft:IP} vanishes, and then the condition
\eqref{eq:FT} takes the form
\begin{multline*}
h(b)\left(f_{x^{\nabla}}(\cdots)|_{t=b}+\int_a^b f_{s}(\cdots)\nabla t \right) \\
+h(a)\left(-f_{x^{\nabla}}(\cdots)|_{t=a}+\int_a^b
f_{z}(\cdots)\nabla t \right)=0.
\end{multline*}
If $x(a)=x_{a}$ and $x(b)=x_{b}$ are given in the formulation of
problem \eqref{vp}, then the  latter equation is trivially satisfied
since $h(a)=h(b)=0$. When $x(a)$ is free, then \eqref{new:bcb} holds;
when $x(b)$ is free, then \eqref{new:bce} holds;
since $h(a)$ or $h(b)$ is, respectively, arbitrary.
\end{proof}

Letting $\T=\R$ in Theorem~\ref{main} we immediately obtain the
corresponding result in the classical context of the calculus of
variations.

\begin{Cor}{\rm (cf. \cite{withAlanZinober,Basia:post_doc_Aveiro:2})}
\label{natural:R}
Let $\mathbb{T}=\R$. If $\tilde{x}$ is an extremizer for
\begin{equation*}
\mathcal{L}[x]=\int_{a}^{b}f(t,x(t),x'(t),x(a), x(b))dt , \quad
(x(a)=x_{a}), \quad (x(b)=x_{b}),
\end{equation*}
then
\begin{equation*}
\frac{d}{dt}f_{x'}(t,\tilde{x}(t),\tilde{x}'(t),\tilde{x}(a),\tilde{x}(b))=
f_{x}(t,\tilde{x}(t),\tilde{x}'(t),\tilde{x}(a),\tilde{x}(b))
\end{equation*}
for all $t \in [a,b]$. Moreover, if $x(a)$ is free, then
\begin{equation}\label{new:bc:Rb}
f_{x'}(a,\tilde{x}(a),\tilde{x}'(a),\tilde{x}(a),\tilde{x}(b))=\int_{a}^{b}f_{z}(t,\tilde{x}(t),\tilde{x}'(t),
\tilde{x}(a),\tilde{x}(b)) dt;
\end{equation}
if $x(b)$ is free, then
\begin{equation}\label{new:bc:Re}
f_{x'}(b,\tilde{x}(b),\tilde{x}'(b),\tilde{x}(a),\tilde{x}(b))=-\int_{a}^{b}f_{s}(t,\tilde{x}(t),\tilde{x}'(t),
\tilde{x}(a),\tilde{x}(b))dt.
\end{equation}
\end{Cor}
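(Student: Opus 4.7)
The plan is to derive the corollary as a direct specialization of Theorem~\ref{main} to the time scale $\mathbb{T}=\mathbb{R}$, with no new argument required beyond translating the nabla-calculus notation into the standard continuous-time notation. So the whole task is to verify that each ingredient of the time-scale statement collapses correctly.

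First, I would collect the identifications that hold when $\mathbb{T}=\mathbb{R}$, using facts already established in Section~\ref{sec:prel}: the backward jump satisfies $\rho(t)=t$, so $x^{\rho}(t)=x(t)$ for admissible $x$; the graininess is $\nu\equiv 0$; by the Remark following Theorem~\ref{propriedades derivada}, $f^{\nabla}$ coincides with the ordinary derivative $df/dt$; by the Remark following Theorem~\ref{propriedades nabla integral}, $\int_{a}^{b}g(t)\nabla t=\int_{a}^{b}g(t)\,dt$; and since no point of $\mathbb{R}$ is right-scattered, $[a,b]_{\kappa}=[a,b]$.

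Second, I would substitute these identifications into the three conclusions of Theorem~\ref{main}. In \eqref{Euler}, the left-hand side $f_{x^{\nabla}}^{\nabla}$ becomes $\frac{d}{dt}f_{x'}$ and the right-hand side $f_{x^{\rho}}$ becomes $f_{x}$, giving precisely the classical Euler--Lagrange equation stated in the corollary, valid on all of $[a,b]$. In the transversality conditions \eqref{new:bcb} and \eqref{new:bce}, every occurrence of $x^{\rho}$ is replaced by $x$, every occurrence of $x^{\nabla}$ by $x'$, and every $\nabla t$ by $dt$; this reproduces \eqref{new:bc:Rb} and \eqref{new:bc:Re} verbatim. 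The alternative presence or absence of the endpoint data $x(a)=x_{a}$, $x(b)=x_{b}$ is inherited from the formulation of \eqref{vp}, so the conditional phrasing (\emph{if $x(a)$ is free}, etc.) carries over without change.

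The only point that deserves a moment's attention, and which I would expect to be the sole potential obstacle, is the notational matching of partial derivatives: $f_{x^{\rho}}$ in the time-scale formulation denotes the partial derivative of $f(t,x,v,z,s)$ with respect to its second slot (the one occupied by $x^{\rho}(t)$), and when $x^{\rho}$ degenerates to $x$ this slot's partial is exactly the classical $f_{x}$. Once this identification is spelled out, the corollary is immediate.
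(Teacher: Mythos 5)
Your proposal is correct and matches the paper's approach exactly: the paper offers no separate proof, stating only that the corollary is obtained immediately by letting $\mathbb{T}=\R$ in Theorem~\ref{main}, and your careful spelling out of the identifications ($\rho(t)=t$, $f^{\nabla}=df/dt$, $\nabla t = dt$, $[a,b]_{\kappa}=[a,b]$) is precisely the intended specialization.
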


\begin{Example}
Consider a river with parallel straight banks, $b$ units apart.
One of the banks coincides with the $y$ axis, the water is assumed to be moving
parallel to the banks with speed $v$ that depends, as usual, on the $x$ coordinate,
but also on the arrival point $y(b)$ ($y(b)$ is not given and is part of the solution
of the problem). A boat with constant speed $c$ ($c^2>v^2$)
in still water is crossing the river in the short possible time,
using the point $y(0) =0$ as point of departure. The endpoint $y(b)$ is allowed to move freely
along the other bank $x=b$. Then one can easily obtain that the time of passage along the path $y(x)$ is given by
\begin{equation*}
\mathcal{T}[y]=\int_{0}^{b}\frac{\sqrt{c^2(1+(y'(x))^2)-v^2(x,y(b))}-v(x,y(b))y'(x)}{c^2-v^2(x,y(b))}dx,
\end{equation*}
where $v=v(x,y(b))$ is a known function of $x$ and $y(b)$. This is not a standard problem
because the integrand depends on $y(b)$. Corollary~\ref{natural:R} gives the solution.
\end{Example}

\begin{Remark}
In the classical setting $f$ does not depend on $x(a)$ and $x(b)$, \textrm{i.e.}, $f_z = 0$ and $f_s = 0$. In that case
(\ref{new:bc:Rb}) and
(\ref{new:bc:Re}) reduce to the well known natural boundary
conditions $f_{x'}\left(a,\tilde{x}(a),\tilde{x}'(a)\right) = 0$ and
$f_{x'}\left(b,\tilde{x}(b),\tilde{x}'(b)\right) = 0$.
\end{Remark}

Similarly, we can obtain other corollaries by choosing different
time scales. The next corollary is obtained from Theorem~\ref{main}
letting $\T=\Z$.

\begin{Cor}
If $\tilde{x}$ is an extremizer for
\begin{equation*}
 L[x] = \sum _{t=a+1}^{b}f(t,x(t-1),\nabla x(t),x(a),x(b)), \quad (x(a)=x_{a}), \quad (x(b)=x_{b}),
\end{equation*}
then $f_{x}\left(t,\tilde{x}(t-1),\nabla \tilde{x}(\tau),\tilde{x}(a),\tilde{x}(b)\right)=
\nabla f_{v}\left(t,\tilde{x}(t-1),\nabla \tilde{x}(t),\tilde{x}(a),\tilde{x}(b)\right)$
for all $t \in [a+1,b]$. Moreover,
\begin{equation*}
f_{v}(a,\tilde{x}(a-1),\nabla
\tilde{x}(a),\tilde{x}(a),\tilde{x}(b)) =\sum
_{t=a+1}^{b}f_{z}(t,\tilde{x}(t-1),\nabla
\tilde{x}(t),\tilde{x}(a),\tilde{x}(b)),
\end{equation*}
if $x(a)$ is not specified and
\begin{equation*}
f_{v}(b,\tilde{x}(b-1),\nabla
\tilde{x}(b),\tilde{x}(a),\tilde{x}(b)) =-\sum
_{t=a+1}^{b}f_{s}(t,\tilde{x}(t-1),\nabla
\tilde{x}(t),\tilde{x}(a),\tilde{x}(b)),
\end{equation*}
if $x(b)$ is not specified.
\end{Cor}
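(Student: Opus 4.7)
The plan is to obtain this corollary as a direct specialization of Theorem~\ref{main} to the time scale $\T = \Z$. First I would recall the translations between the nabla calculus on $\Z$ and the classical backward-difference calculus, as laid out in Section~\ref{sec:prel}: every integer is both left- and right-scattered, $\rho(t) = t-1$, so $x^{\rho}(t) = x(t-1)$; by Theorem~\ref{propriedades derivada} the nabla derivative reduces to $x^{\nabla}(t) = x(t) - x(t-1) = \nabla x(t)$; and the nabla integral collapses to a finite sum, $\int_a^b g(t)\nabla t = \sum_{t=a+1}^{b} g(t)$, by the remark after Theorem~\ref{propriedades nabla integral}. Under this identification, the partial derivatives written $f_{x^{\rho}}$ and $f_{x^{\nabla}}$ in Theorem~\ref{main} coincide with the $f_x$ and $f_v$ used in the statement of the corollary, since $f$ is parametrized by the ordered variables $(t,x,v,z,s)$.

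Next I would apply \eqref{Euler}. Because the nabla derivative of $t \mapsto f_v\bigl(t,\tilde{x}(t-1),\nabla\tilde{x}(t),\tilde{x}(a),\tilde{x}(b)\bigr)$ on $\Z$ is again the backward difference $\nabla f_v$, the equation \eqref{Euler} becomes exactly the discrete Euler-Lagrange equation asserted. One point to record is the domain of validity: since $a$ is right-scattered in $\Z$, the convention stated at the end of Section~\ref{sec:prel} gives $[a,b]_{\kappa} = [\sigma(a),b] = [a+1,b]$, which matches the corollary. For the transversality conditions, I would substitute into \eqref{new:bcb} and \eqref{new:bce}; the right-hand sides are nabla integrals of the ld-continuous functions $f_z(\cdots)$ and $f_s(\cdots)$ (ld-continuity follows from the blanket regularity hypothesis on $f$), and the same sum formula converts each into $\sum_{t=a+1}^{b}$, with the minus sign in \eqref{new:bce} preserved.

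Expected obstacle: essentially none, since all of the analytic work already lives in the proof of Theorem~\ref{main}. The task here is purely bookkeeping --- identifying $f_x$ with $f_{x^{\rho}}$ and $f_v$ with $f_{x^{\nabla}}$, translating $\int (\cdot) \nabla t$ into $\sum_{t=a+1}^{b}(\cdot)$, and noting the domain shift $[a,b] \to [a+1,b]$. No convergence or regularity issue arises, since on $\Z$ these sums are finite.
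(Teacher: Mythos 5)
Your proposal is correct and matches the paper's approach exactly: the paper offers no separate proof, stating only that the corollary "is obtained from Theorem~\ref{main} letting $\T=\Z$," which is precisely the specialization you carry out. Your bookkeeping of $\rho(t)=t-1$, $x^{\nabla}=\nabla x$, $\int_a^b(\cdot)\nabla t=\sum_{t=a+1}^{b}(\cdot)$, and $[a,b]_\kappa=[a+1,b]$ is the whole content of the argument.
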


Let $\mathbb{T}=q^{\mathbb{N}_{0}}$, $q>1$. To simplify notation, we
use $\nabla_{q}$ for the $q$-nabla derivative:
$\nabla_{q}x(t)=\frac{x(t)-x(tq^{-1})}{t(1-q^{-1})}$.

\begin{Cor}
If $\tilde{x}$ is an extremizer for
\begin{gather*}
L[x] = (1-q^{-1})\sum _{t\in(a,b]}t f
\left(t,x(q^{-1}t),\nabla_{q}x(t),x(a),x(b)\right),\\
(x(a)=x_{a}), \quad (x(b)=x_{b}),
\end{gather*}
then
$f_{x}\left(t,\tilde{x}(q^{-1}t),\nabla_{q}\tilde{x}(t),\tilde{x}(a),\tilde{x}(b)\right)
= \nabla_{q}f_{
v}\left(t,\tilde{x}(q^{-1}t),\nabla_{q}\tilde{x}(t),\tilde{x}(a),\tilde{x}(b)\right)$
for all $t \in (a,b]$. Moreover, if $x(a)$ is free, then
\begin{multline*}
f_{v}\left(a,\tilde{x}(aq^{-1}),\nabla_{q}\tilde{x}(a),\tilde{x}(a),\tilde{x}(b)\right)\\
=(1-q^{-1})\sum _{t\in(a,b]}t
f_{z}\left(t,\tilde{x}(q^{-1}t),\nabla_{q}\tilde{x}(t),\tilde{x}(a),\tilde{x}(b)\right);
\end{multline*}
if $x(b)$ is free, then
\begin{multline*}
f_{v}\left(b,\tilde{x}(bq^{-1}),\nabla_{q}\tilde{x}(b),\tilde{x}(a),\tilde{x}(b)\right)\\
=-(1-q^{-1})\sum _{t\in(a,b]}t
f_{s}\left(t,\tilde{x}(q^{-1}t),\nabla_{q}\tilde{x}(t),\tilde{x}(a),\tilde{x}(b)\right).
\end{multline*}
\end{Cor}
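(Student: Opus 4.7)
The plan is to obtain this corollary as a direct specialization of Theorem~\ref{main} to the time scale $\mathbb{T}=q^{\mathbb{N}_0}$ with $q>1$. The key work is simply translating the abstract time-scale data ($\rho$, $\nabla$, and $\int\cdot\,\nabla t$) into their concrete $q$-analogues and verifying that the general formulas \eqref{Euler}, \eqref{new:bcb}, \eqref{new:bce} become the three identities in the statement.

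First I would record the pointwise data. On $\mathbb{T}=q^{\mathbb{N}_0}$ every point $t=q^k$ with $k\geq 1$ is left-scattered, with $\rho(t)=q^{-1}t$ and graininess $\nu(t)=t-q^{-1}t=t(1-q^{-1})$. Consequently $x^{\rho}(t)=x(q^{-1}t)$, and Theorem~\ref{propriedades derivada} gives
$$x^{\nabla}(t)=\frac{x(t)-x(q^{-1}t)}{t(1-q^{-1})}=\nabla_{q}x(t),$$
so that the abstract nabla derivative on this time scale coincides with the $q$-nabla derivative defined in the statement.

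Next I would convert the nabla integral into a $q$-sum. Since every point of $(a,b]\cap\mathbb{T}$ is left-scattered, iterating Theorem~\ref{propriedade integral} together with the additivity rule in Theorem~\ref{propriedades nabla integral} gives, for any ld-continuous $g$,
$$\int_{a}^{b}g(t)\nabla t=\sum_{t\in(a,b]}\nu(t)g(t)=(1-q^{-1})\sum_{t\in(a,b]}t\,g(t).$$
Substituting this into the functional $\mathcal{L}[x]$ of \eqref{vp}, with $x^{\rho}(t)=x(q^{-1}t)$ and $x^{\nabla}(t)=\nabla_{q}x(t)$, reproduces exactly the functional $L[x]$ stated in the corollary.

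Finally I would read off the three conclusions. The Euler--Lagrange equation \eqref{Euler} from Theorem~\ref{main}, written with $x^{\rho}(t)$, $x^{\nabla}(t)$, and $f_{x^{\rho}}^{\nabla}$ replaced by $x(q^{-1}t)$, $\nabla_{q}x(t)$, and $\nabla_{q}f_{v}$ respectively, is precisely the stated $q$-Euler--Lagrange identity valid for all $t\in(a,b]$; and substituting the $q$-sum expression for $\int_{a}^{b}f_{z}\,\nabla t$ and $\int_{a}^{b}f_{s}\,\nabla t$ into the transversality conditions \eqref{new:bcb} and \eqref{new:bce} yields the two natural boundary conditions claimed for the cases where $x(a)$ or $x(b)$ is free. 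No genuine obstacle arises; the only point that requires care is to verify that the nabla-integral-to-$q$-sum conversion really is $(1-q^{-1})\sum_{t\in(a,b]}t\,(\cdot)$ (including the correct lower endpoint being excluded), but this follows transparently from $\nu(t)=t(1-q^{-1})$ and Theorem~\ref{propriedade integral}.
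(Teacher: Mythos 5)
Your proposal is correct and follows exactly the route the paper intends: the corollary is stated there as an immediate specialization of Theorem~\ref{main} to $\mathbb{T}=q^{\mathbb{N}_0}$, and your computations of $\rho(t)=q^{-1}t$, $\nu(t)=t(1-q^{-1})$, the identification $x^{\nabla}=\nabla_{q}x$, and the conversion of the nabla integral into the weighted $q$-sum $(1-q^{-1})\sum_{t\in(a,b]}t\,(\cdot)$ are precisely the verifications needed (including that $[a,b]_{\kappa}=(a,b]$ since every point of $q^{\mathbb{N}_0}$ is right-scattered). Nothing is missing.
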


We illustrate the application of Theorem~\ref{main} with an
example.

\begin{Example}
\label{lagrangform}
Consider the problem
\begin{equation}\label{ex:1}
 \text{minimize} \quad   \mathcal{L}[x]=\int_{0}^{1}\left((x^{\nabla}(t))^{2}+\alpha x^2(0)+\beta(x(1)-1)^2\right)\nabla t
\end{equation}
where $\alpha,\beta \in \R^{+}$. If $\tilde{x}$ is a local minimizer
of \eqref{ex:1}, then conditions \eqref{Euler}--\eqref{new:bce} must hold, \textrm{i.e.},
\begin{equation}\label{ex:1:1}
(2\tilde{x}^{\nabla}(t))^{\nabla}=0,
\end{equation}
\begin{equation}
\label{ex:1:2}
2\tilde{x}^{\nabla}(0)=\int_{0}^{1}2\alpha x(0)\nabla t,
\quad 2\tilde{x}^{\nabla}(1)=-\int_{0}^{1}2\beta (x(1)-1)\nabla t.
\end{equation}
Equation \eqref{ex:1:1} implies that there exists a constant $c\in
\R$ such that $\tilde{x}^{\nabla}(t)=c$.
Solving this equation we obtain $\tilde{x}(t)=ct+\tilde{x}(0)$. In
order to determine $c$ and $\tilde{x}(0)$ we use the natural
boundary conditions \eqref{ex:1:2} which we can now rewrite as a
system of two equations:
\begin{equation}
\label{ex:1:3}
c-\alpha \tilde{x}(0)=0, \quad
c+\beta (c+\tilde{x}(0)-1)=0.
\end{equation}
The solution of \eqref{ex:1:3} is $c=\frac{\alpha \beta}{\alpha
+\beta+\alpha \beta}$ and $\tilde{x}(0)=\frac{\beta}{\alpha
+\beta+\alpha \beta}$. Hence, $\tilde{x}(t)=c(\alpha, \beta)
t+\tilde{x}(0,\alpha, \beta)$ is a candidate for minimizer. We note
that
$\lim_{\alpha, \beta \rightarrow \infty}c( \alpha, \beta)= 1$,
$\lim_{\alpha, \beta \rightarrow \infty}\tilde{x}(0,\alpha,
\beta)= 0$, and in the limit $\alpha, \beta \rightarrow \infty$ the solution of
\eqref{ex:1} coincides with the solution of the following problem
with fixed initial and terminal points:
$\min \mathcal{L}[x]
=\int_{0}^{1} (x^{\nabla}(t))^{2} \nabla t$,
subject to $x(0) = 0$ and $x(1) = 1$.
\begin{figure}[t]
\begin{center}
\psfrag{alpha=beta=2}{{\tiny $\alpha=\beta=2$}}
\psfrag{alpha=beta=4}{{\tiny $\alpha=\beta=4$}}
\psfrag{alpha=beta=20}{{\tiny $\alpha=\beta=20$}}
\psfrag{beta=inf}{{\tiny $\beta=\infty$}}
\includegraphics[width=8cm,height=6cm,angle=-90]{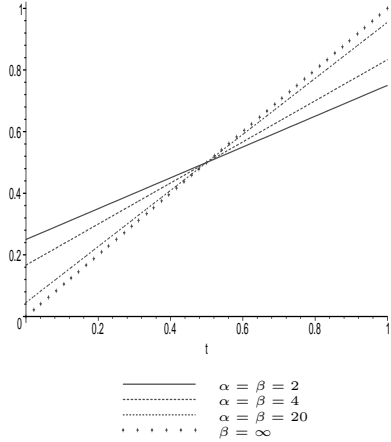}
\vspace*{-1.2cm}
\caption{The extremal $\tilde{x}(t)=c(\alpha, \beta)
t+\tilde{x}(0,\alpha, \beta)$ of Example~\ref{lagrangform} for
different values of parameters $\alpha$ and $\beta$.}
\end{center}
\end{figure}
Expression $\alpha x^2(0)+\beta(x(1)-1)^2$ added to the Lagrangian
$(x^{\nabla}(t))^{2}$ works like a penalty function
when $\alpha$ and $\beta$ go to infinity.
The penalty function itself grows, and forces the merit function \eqref{ex:1}
to increase in value when the constraints $x(0) = 0$ and $x(1) = 1$
are violated, and causes no growth when constraints are fulfilled.
\end{Example}


\subsection{Hamiltonian approach}
\label{sec:mr:ham}

Now let us consider the more general variational problem of optimal control on
time scales: to minimize (maximize) the functional
\begin{equation}\label{ocp}
\mathcal{L}[x,u]=\int_{a}^{b}f(t,x^{\rho}(t),u^{\rho}(t),x(a),x(b))\nabla
     t ,
\end{equation}
subject to
\begin{equation}\label{ocbc}
\begin{split}
x^{\nabla}(t)=g(t,x^{\rho}(t),u^{\rho}(t),x(a),x(b)),\\
(x(a)=x_{a}), \quad (x(b)=x_{b}),
\end{split}
\end{equation}
where $x_{a},x_{b}\in \R$, $f(t,x,v,z,s):[A,b]\times
\R^{4}\rightarrow \R$ and $g(t,x,v,z,s):[A,b]\times \R^{4}
\rightarrow \R$ have partial continuous derivatives with respect to
$x,v,z,s$ for all $t\in[A,b]$, and $f(t,\cdot,\cdot,\cdot,\cdot)$,
$g(t,\cdot,\cdot,\cdot,\cdot)$ and
their partial derivatives are ld-continuous for all $t$.
We also assume that the function $g_{x}$ is $\nu$-regressive.

A necessary optimality condition for problem
\eqref{ocp}--\eqref{ocbc} can be obtained from a general Lagrange
multiplier theorem in space of infinite dimension. We form a
Lagrange function $f+\lambda^{\rho}(g-x^{\nabla})$ by introducing a
multiplier $\lambda:[A,b]\rightarrow \R$. In what follows we shall
assume that $\lambda^{\rho}$ is a nabla differentiable function on
$[a,b]$. For examples of time scales for which the composition of a
nabla differentiable function with $\rho$ is not nabla
differentiable, we refer the reader to \cite{BP1}. Note that we are interested in the study of normal extremizers only.
In general one needs to replace $f$ in $f+\lambda^{\rho}(g-x^{\nabla})$ by $\lambda_{0}f$.
Normal extremizers correspond to $\lambda_{0}=1$
while abnormal ones correspond to $\lambda_{0}=0$.

\begin{Th}
\label{hamiltonian} If $(\tilde{x},\tilde{u})$ is a normal
extremizer for the problem \eqref{ocp}--\eqref{ocbc}, then there
exists a function $\tilde{p}$ such that the triple
$(\tilde{x},\tilde{u},\tilde{p})$ satisfies the Hamiltonian system
\begin{equation}
\label{ham1}
x^{\nabla}(t)=H_{p}(t,x^{\rho}(t),u^{\rho}(t),p(t),x(a),x(b)),
\end{equation}
\begin{equation}\label{ham2}
(p(t))^{\nabla}=-H_{x^{\rho}}(t,x^{\rho}(t),u^{\rho}(t),p(t),x(a),x(b)),
\end{equation}
the stationary condition
\begin{equation}\label{ham3}
H_{u^{\rho}}(t,x^{\rho}(t),u^{\rho}(t),p(t),x(a),x(b))=0,
\end{equation}
for all $t \in [a,b]_{\kappa}$, and the transversality condition
\begin{equation}\label{ham4}
p(a)=-\int_{a}^{b}H_{z}(t,x^{\rho}(t),u^{\rho}(t),p(t),x(a),x(b))\nabla
t,
\end{equation}
when $x(a)$ is free; the transversality condition
\begin{equation}\label{ham5}
p(b)=\int_{a}^{b}H_{s}(t,x^{\rho}(t),u^{\rho}(t),p(t),x(a),x(b))\nabla
t,
\end{equation}
when $x(b)$ is free, where the Hamiltonian
$H(t,x,v,p,z,s):[A,b]\times \R^{5} \rightarrow \R$ is defined by
\begin{equation*}
H(t,x^{\rho},u^{\rho},p,x(a),x(b))=f(t,x^{\rho},u^{\rho},x(a),x(b))
+pg(t,x^{\rho},u^{\rho},x(a),x(b)).
\end{equation*}
\end{Th}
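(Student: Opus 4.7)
The plan is to reduce the constrained optimal control problem \eqref{ocp}--\eqref{ocbc} to an unconstrained problem of the form \eqref{vp}, so that Theorem~\ref{main} applies directly. I would introduce a multiplier function $\lambda:[A,b]\to\R$ (with $\lambda^\rho$ nabla differentiable on $[a,b]$, as the paper already posits) and work with the augmented integrand
$$\mathcal{F}(t,x^\rho,x^\nabla,u^\rho,x(a),x(b)) := f(t,x^\rho,u^\rho,x(a),x(b)) + \lambda^\rho(t)\bigl[g(t,x^\rho,u^\rho,x(a),x(b)) - x^\nabla\bigr].$$
The normality hypothesis, together with the $\nu$-regressivity of $g_x$ (which supplies the constraint qualification by making the linearized state equation solvable in the variations), is exactly what lets the cited infinite-dimensional Lagrange multiplier theorem produce a $\lambda$ for which $(\tilde x,\tilde u)$ is a free weak local extremizer of $J[x,u]:=\int_a^b \mathcal{F}\,\nabla t$.

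Next I would imitate the perturbation argument of Theorem~\ref{main}, but now in two dependent variables. Plugging $x=\tilde x+\varepsilon h$ and $u=\tilde u+\varepsilon k$ into $J$, with $h,k\in C^1_{ld}([A,b],\R)$ (imposing $h(a)=0$ only when $x(a)$ is prescribed, and similarly at $b$; no endpoint restrictions on $k$), and setting $\phi'(0)=0$ yields
\begin{equation*}
\int_a^b \Bigl[(f_x+\lambda^\rho g_x)h^\rho + (f_v+\lambda^\rho g_v)k^\rho + (f_z+\lambda^\rho g_z)h(a) + (f_s+\lambda^\rho g_s)h(b) - \lambda^\rho h^\nabla\Bigr]\nabla t = 0,
\end{equation*}
all partials evaluated along $(\tilde x,\tilde u)$.

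Three test families then extract the five conditions. First, with $k\equiv 0$ and $h(a)=h(b)=0$, Lemma~\ref{lemma4} converts the $h^\rho,h^\nabla$ terms into $(\lambda^\rho)^\nabla = -(f_x+\lambda^\rho g_x)$; setting $p(t):=\lambda^\rho(t)$ and using $H_{x^\rho}=f_x+pg_x$, this is the adjoint equation \eqref{ham2}, while $H_p=g$ combined with the dynamical constraint itself delivers \eqref{ham1}. Second, with $h\equiv 0$ and $k$ vanishing at the endpoints, only the $k^\rho$ term survives and the same fundamental lemma (with its $\eta^\nabla$-coefficient set to zero) forces $f_v+\lambda^\rho g_v\equiv 0$, i.e.\ the stationary condition \eqref{ham3}. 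Third, once \eqref{ham2} is in force the identity above collapses --- after rewriting $(f_x+\lambda^\rho g_x)h^\rho-\lambda^\rho h^\nabla$ as $(-\lambda^\rho h)^\nabla$ via the product rule of Theorem~\ref{propriedades derivada} and the nabla fundamental theorem --- to the boundary form displayed in \eqref{eq:aft:IP}; independent variation of $h(a)$ when $x(a)$ is free gives $\lambda^\rho(a)=-\int_a^b(f_z+\lambda^\rho g_z)\nabla t$, and of $h(b)$ when $x(b)$ is free gives $\lambda^\rho(b)=\int_a^b(f_s+\lambda^\rho g_s)\nabla t$; under $p=\lambda^\rho$ these are \eqref{ham4} and \eqref{ham5}.

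The main obstacle is not analytic but notational: the bookkeeping identification ``multiplier $p$ in the Hamiltonian $=$ shifted multiplier $\lambda^\rho$ in the augmented Lagrangian.'' This identification is precisely why the standing hypothesis that $\lambda^\rho$ be nabla differentiable is announced in advance: on a general time scale $\lambda^\rho$ need not inherit differentiability from $\lambda$. With the dictionary in place, every partial $H_{x^\rho}$, $H_{u^\rho}$, $H_z$, $H_s$, $H_p$ translates term-by-term into the expression produced by Theorem~\ref{main} applied to $\mathcal{F}$, so no new time-scale calculus beyond Section~\ref{sec:prel} is needed.
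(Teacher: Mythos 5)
Your proposal is correct and follows essentially the same route as the paper: augment the integrand with $\lambda^{\rho}(g-x^{\nabla})$, rewrite it via the Hamiltonian, derive the adjoint equation, stationary condition, and transversality conditions from the first variation (the paper invokes Theorem~\ref{main} wholesale where you redo the two-variable perturbation explicitly, and it appeals to the $\nu$-regressivity of $g_x$ to solve the linear adjoint equation where you phrase it as a constraint qualification), and finally set $\tilde{p}=\tilde{\lambda}^{\rho}$. No substantive difference in method or in the calculus-on-time-scales tools used.
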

\begin{proof}
Let $(\tilde{x},\tilde{u})$ be a normal extremizer for the problem
\eqref{ocp}--\eqref{ocbc}. Using the Lagrange multiplier rule, we
form the expression $\lambda^{\rho}(g-x^{\nabla})$ for each value of
$t$ (we are assuming that $\mathbb{T}$ is a time scale for which
$\lambda^{\rho}$ is a nabla differentiable function on $[a,b]$). The
replacement of $f$ by $f+\lambda^{\rho}(g-x^{\nabla})$ in the
objective functional gives us a new problem: minimize (maximize)
\begin{equation}
\label{new:ocp}
\begin{split}
\mathcal{I}[x,u,\lambda]=\int_{a}^{b}\Bigl\{&f(t,x^{\rho}(t),u^{\rho}(t),x(a),x(b))\\
&\quad +\lambda^{\rho}(t)[g(t,x^{\rho}(t),u^{\rho}(t),x(a),x(b))
-x^{\nabla}(t)]\Bigr\}\nabla t, \\
&(x(a)=x_{a}), \quad (x(b)=x_{b}).
\end{split}
\end{equation}
Substituting
\begin{equation*}
H(t,x^{\rho},u^{\rho},\lambda^{\rho},x(a),x(b))
=f(t,x^{\rho},u^{\rho},x(a),x(b))
+\lambda^{\rho}g(t,x^{\rho},u^{\rho},x(a),x(b))
\end{equation*}
into \eqref{new:ocp} we can simplify the new functional to the form
\begin{equation}\label{ham:ocp}
\mathcal{I}[x,u,\lambda]=\int_{a}^{b}[H(t,x^{\rho},u^{\rho},\lambda^{\rho},x(a),x(b))
-\lambda^{\rho}(t)x^{\nabla}(t)]\nabla t .
\end{equation}
The choice of $\lambda^{\rho}$ will produce no effect on the value
of the functional $\mathcal{I}$, as long as the equation
$x^{\nabla}(t)=g(t,x^{\rho}(t),u^{\rho}(t),x(a),x(b))$ is satisfied,
i.e., as long as
\begin{equation}\label{s1}
x^{\nabla}(t)=H_{\lambda^{\rho}}(t,x^{\rho}(t),u^{\rho}(t),
\lambda^{\rho}(t),x(a),x(b)).
\end{equation}
Therefore, we impose \eqref{s1} as a necessary condition for the
minimizing (maximizing) of the functional $\mathcal{I}$. Under
condition \eqref{s1} the free extremum of the $\mathcal{I}$ is
identical with the constrained extremum of the functional
$\mathcal{L}$. In view of \eqref{ham:ocp}, applying
Theorem~\ref{main} to the problem \eqref{new:ocp} gives
\begin{equation}\label{s2}
(\lambda^{\rho}(t))^{\nabla}=-H_{x^{\rho}}(t,x^{\rho}(t),
u^{\rho}(t),\lambda^{\rho}(t),x(a),x(b)),
\end{equation}
\begin{equation}\label{s3}
H_{u^{\rho}}(t,x^{\rho}(t),u^{\rho}(t),\lambda^{\rho}(t),
x(a),x(b))=0,
\end{equation}
for all $t \in [a,b]_{\kappa}$, and the transversality conditions
\begin{equation}\label{trans}
\begin{split}
\lambda^{\rho}(a)&=-\int_{a}^{b}H_{z}(t,x^{\rho}(t),u^{\rho}(t),
\lambda^{\rho}(t),x(a),x(b))\nabla t,\\
\lambda^{\rho}(b)&=\int_{a}^{b}H_{s}(t,x^{\rho}(t),u^{\rho}(t),
\lambda^{\rho}(t),x(a),x(b))\nabla t,
\end{split}
\end{equation}
in case $x(a)$ and $x(b)$ are free. Note that \eqref{s2} is the
first order nonhomogeneous linear equation and from the assumptions
on $f$ and $g_{x}$, the solution $\tilde{\lambda}^{\rho}$ exists
(see Theorem~3.42 in \cite{BP2}). Therefore the triple
$(\tilde{x},\tilde{u},\tilde{\lambda}^{\rho})$ satisfies the system
\eqref{s1}--\eqref{s3} and the transversality conditions
\eqref{trans} in case $x(a)$ and $x(b)$ are free. Putting
$\tilde{p}=\tilde{\lambda}^{\rho}$ we obtain the intended conditions
\eqref{ham1}--\eqref{ham5}.
\end{proof}

\begin{Remark}
Theorem~\ref{hamiltonian} covers the case when
$(\tilde{x},\tilde{u})$ is a normal extremizer for the problem
\eqref{ocp}--\eqref{ocbc}. We do not consider problems with abnormal
extremizers, but in general such extremizers are possible. Let us
consider the problem
\begin{equation}
\begin{gathered}
\text{minimize} \quad \mathcal{L}[x,u]=\int_{0}^{1}(u(t))^2dt ,\\
x'(t)=0,\\
x(0)=0, \quad x(1)=0
 \end{gathered}
\end{equation}
defined on $\T=\R$. Then, the pair $(\tilde{x}(t),\tilde{u}(t))=(0,0)$ is abnormal
minimizer for this problem. Observe that
$\mathcal{I}[\tilde{x}(t),\tilde{u}(t),\lambda]=0$ for all
$\lambda\in C^{1}([0,1],\R)$. However, for the triple
$(x(t),u(t),\lambda(t))=(t^2 -t,0,2t-1)$ we have
$\mathcal{I}[x(t),u(t),\lambda(t)]=\int_{0}^{1}-(2t-1)^2
dt=-\frac{1}{3}<0$.
\end{Remark}

\begin{Example}\label{ham:nc}
Consider the problem
\begin{equation}\label{ham:nc:p}
\begin{gathered}
\text{minimize} \quad
\mathcal{L}[x,u]=\int_{0}^{3}(u^{\rho}(t))^2+t^{2}(x(3)-1)^2+t^{2}(x(0)-1)^2
\nabla t ,\\
x^{\nabla}(t)=u^{\rho}(t).
 \end{gathered}
\end{equation}
To find candidate solutions for the problem, we start by forming the
Hamiltonian function
\begin{equation*}
H(t,x^{\rho},u^{\rho},p,x(0),x(3))=(u^{\rho})^2+t^{2}(x(3)-1)^2
+t^{2}(x(0)-1)^2+pu^{\rho}.
\end{equation*}
Candidate solutions $(\tilde{x},\tilde{u})$ are those satisfying
the following conditions:
\begin{equation}
\label{ham:nc:h1}
(p(t))^{\nabla}=0, \quad
u^{\rho}(t)=x^{\nabla}(t), \quad
2u^{\rho}(t)+p(t)=0,
\end{equation}
\begin{equation}
\label{ham:nc:4}
p(0)=-\int_{0}^{3}2t^{2}(x(0)-1)\nabla t,\quad
p(3)=\int_{0}^{3}2t^{2}(x(3)-1)\nabla t.
\end{equation}
From \eqref{ham:nc:h1} we conclude that $p(t)=c$
and a possible solution is $\tilde{x}(t)=-\frac{c}{2}t+d$, where
$c$, $d$ are constants of nabla integration. In order to determine
$c$ and $d$ we use the transversality conditions \eqref{ham:nc:4}
that we can write as
\begin{equation}
\label{ham:nc:h5}
c =-\int_{0}^{3}2t^{2}(d-1)\nabla t,\quad
c =\int_{0}^{3}2t^{2}\left(-\frac{3c}{2}+d-1\right)\nabla t.
\end{equation}
The values of the nabla integrals in \eqref{ham:nc:h5} depend on the
time scale. Notwithstanding this fact, substituting
$\int_{0}^{3}t^{2}\nabla t=k$, $k\in \R$, into \eqref{ham:nc:h5} we
can simplify equations to the form
\begin{equation}
\label{ham:nc:di}
c=-2k(d-1),\quad
c=2k\left(-\frac{3c}{2}+d-1 \right).
\end{equation}
Equations \eqref{ham:nc:di} yield $c=0$ and $d=1$. Therefore, the
extremal of the problem \eqref{ham:nc:p} is $\tilde{x}(t)=1$ on any time scale.
\end{Example}

When $\T=\R$ we obtain from Theorem~\ref{hamiltonian}
the following corollary.

\begin{Cor}\label{forR}
Let $(\tilde{x},\tilde{u})$ be a normal extremizer for
\begin{equation*}
L[x,u] = \int_a^b f(t,x(t),u(t),x(a),x(b)) dt
\end{equation*}
subject to
\begin{equation*}
\begin{gathered}
x'(t)=g(t,x(t),u(t),x(a),x(b))\\
(x(a) = x_{a}) \quad (x(b)=x_{b}),
\end{gathered}
\end{equation*}
where $a,b\in \R$, $a<b$. Then there exists a function $\tilde{p}$
such that the triple $(\tilde{x},\tilde{u},\tilde{p})$ satisfies the
Hamiltonian system
\begin{equation*}
x'(t)=H_{\lambda}, \quad p'(t)=-H_{x},
\end{equation*}
the stationary condition
\begin{equation*}
H_{u}=0,
\end{equation*}
for all $t \in [a,b]$ and the transversality condition
\begin{equation*}
p(a) =-\int_{a}^{b}H_{z}dt,
\end{equation*}
when $x(a)$ is free; the transversality condition
\begin{equation*}
p(b) =\int_{a}^{b}H_{s}dt,
\end{equation*}
when $x(b)$ is free, where the Hamiltonian $H$ is defined by
$$H(t,x,u,p,z,s)=f(t,x,u,z,s) + p \, g(t,x,u,z,s).$$
\end{Cor}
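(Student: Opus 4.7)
The plan is to read the corollary off directly from Theorem~\ref{hamiltonian} by setting $\mathbb{T}=\mathbb{R}$ and tracking how each time-scale ingredient degenerates to its classical counterpart. Since the statement is a pure specialization, essentially all of the work has already been done in the preceding theorem; the present task is bookkeeping together with one small consistency check on the hypotheses.

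First I would recall from Section~\ref{sec:prel} that on $\mathbb{T}=\mathbb{R}$ every point is left-dense, so $\rho(t)=t$ and $\nu(t)=0$ for all $t\in[a,b]$. This collapses $x^{\rho}$, $u^{\rho}$, and $\lambda^{\rho}$ to $x$, $u$, and $\lambda$ respectively, so in particular the technical assumption used in the proof of Theorem~\ref{hamiltonian} that $\lambda^{\rho}$ be nabla differentiable is automatic (it reduces to $\lambda$ being differentiable, which is exactly the regularity we want for $\tilde{p}$). By the Remarks after Theorem~\ref{propriedades derivada} and Theorem~\ref{propriedades nabla integral}, the nabla derivative coincides with the ordinary derivative and the nabla integral coincides with the Riemann integral. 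Moreover $[a,b]_{\kappa}=[a,b]$, since $a$ is right-dense.

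Next I would set $\tilde{p}:=\tilde{\lambda}$ and substitute these identifications into the conclusions \eqref{ham1}--\eqref{ham5} of Theorem~\ref{hamiltonian}. This yields at once the canonical equations $x'(t)=H_{p}$ and $p'(t)=-H_{x}$, the stationary condition $H_{u}=0$ on $[a,b]$, and the transversality conditions $p(a)=-\int_{a}^{b}H_{z}\,dt$ when $x(a)$ is free and $p(b)=\int_{a}^{b}H_{s}\,dt$ when $x(b)$ is free, with the Hamiltonian $H(t,x,u,p,z,s)=f(t,x,u,z,s)+p\,g(t,x,u,z,s)$ inherited unchanged from the time-scale definition.

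The only point that deserves checking is the compatibility of the $\nu$-regressivity hypothesis on $g_{x}$ imposed in Theorem~\ref{hamiltonian} with the present setting. This is vacuous here: since $\nu\equiv 0$ on $\mathbb{R}$, the regressivity condition $1-\nu(t)g_{x}(t)\neq 0$ holds automatically, and the adjoint equation becomes a standard first-order linear ODE whose solution exists on $[a,b]$ under the stated continuity assumptions on $f$ and $g$. Consequently the only real obstacle is purely notational (matching the display $H_{\lambda}$ in the statement with the variable $p$ in the Hamiltonian), and no additional analytic argument is required.
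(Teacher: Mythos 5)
Your proposal is correct and follows exactly the route the paper intends: the corollary is obtained by direct specialization of Theorem~\ref{hamiltonian} to $\mathbb{T}=\mathbb{R}$, where $\rho(t)=t$, $\nu(t)\equiv 0$, the nabla derivative and integral reduce to the classical ones, and the regularity and $\nu$-regressivity hypotheses become automatic. The paper gives no separate argument beyond this specialization, so your bookkeeping (including the identification $\tilde{p}=\tilde{\lambda}$ and the remark that $H_{\lambda}$ in the display is just $H_{p}$) matches its proof in substance.
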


We illustrate the use of Corollary~\ref{forR} with an example.

\begin{Example}\label{ham:nc:R}
Consider the problem
\begin{equation}\label{ham:nc:rp}
\begin{gathered}
\text{minimize} \quad \mathcal{L}[x,u]=\int_{-1}^{1}(u(t))^2dt ,\\
x'(t)=u(t)+x(-1)t+x(1)t.
 \end{gathered}
\end{equation}
We begin by writing the Hamiltonian function
\begin{equation*}
H(t,x,u,p,x(-1),x(1))=u^2+p(u+x(-1)t+x(1)t).
\end{equation*}
Candidate solutions $(\tilde{x},\tilde{u})$ are those satisfying the
following conditions:
\begin{equation}\label{ham:nc:rh1}
p'(t)=0,
\end{equation}
\begin{equation}\label{ham:nc:rh2}
x'(t)=u(t)+x(-1)t+x(1)t,
\end{equation}
\begin{equation}\label{ham:nc:rh3}
2u(t)+p(t)=0,
\end{equation}
\begin{equation}\label{ham:nc:rh4}
\begin{split}
p(-1)&=-\int_{-1}^{1}p(t)tdt, \\
p(1)&=\int_{-1}^{1}p(t)tdt.
\end{split}
\end{equation}
The equation \eqref{ham:nc:rh1} has solution $\tilde{p}(t)=c$,
$-1\leq t \leq 1$, which upon substitution into \eqref{ham:nc:rh4}
yields
\begin{equation*}
c=\int_{-1}^{1}ctdt=0.
\end{equation*}
From the stationary condition \eqref{ham:nc:rh3} we get
$\tilde{u}(t)=0$. Therefore, $\mathcal{L}[\tilde{x},\tilde{u}]=0$.
Finally, substituting the optimal control candidate back into
\eqref{ham:nc:rh2} yields
\begin{equation}\label{ham:nc:rh3:d}
\tilde{x}'(t)=\tilde{x}(-1)t+\tilde{x}(1)t.
\end{equation}
Integrating equation \eqref{ham:nc:rh3:d} we obtain
\begin{equation}\label{ham:nc:r5}
\tilde{x}(t)=\frac{1}{2}t^{2}(\tilde{x}(-1)+\tilde{x}(1))+d.
\end{equation}
Substituting $t=1$ and $t=-1$ into \eqref{ham:nc:r5} we get $d=0$
and $\tilde{x}(-1)=\tilde{x}(1)$. Therefore, extremals of the
problem \eqref{ham:nc:rp} are $\tilde{x}(t)=t^{2}\tilde{x}(1)$,
where $\tilde{x}(1)$ is any real number.
\end{Example}

\begin{Th}\label{sc}
Let $(x^{\rho},u^{\rho},z,s)\rightarrow f(t,x^{\rho},u^{\rho},z,s)$
and $(x^{\rho},u^{\rho},z,s)\rightarrow g(t,x^{\rho},u^{\rho},z,s)$
be jointly convex (concave) in $(x^{\rho},u^{\rho},z,s)$ for any
$t$. If $(\tilde{x},\tilde{u},\tilde{p})$ is a solution of system
\eqref{ham1}--\eqref{ham5} and $\tilde{p}(t)\geq 0$ for all
$t\in[a,b]$, then $(\tilde{x},\tilde{u})$ is a global minimizer
(maximizer) of problem \eqref{ocp}--\eqref{ocbc}.
\end{Th}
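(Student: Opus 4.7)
My plan is to prove the convex case (global minimizer) in detail; the concave case follows by reversing the inequalities. I fix an arbitrary admissible $(x,u)$ and show that $\mathcal{L}[x,u]-\mathcal{L}[\tilde{x},\tilde{u}]\geq 0$ by combining the two convexity inequalities for $f$ and $g$ with the Hamiltonian system and the transversality conditions, using the sign of $\tilde{p}$ to align these pieces.

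I begin with the joint convexity of $f$ at the extremal point, together with the identity $f_{\bullet}=H_{\bullet}-\tilde{p}\,g_{\bullet}$ coming from the definition of $H$. This produces
\[
f(\cdot)-f(\tilde{\cdot})\geq H_{x^{\rho}}\Delta x^{\rho}+H_{u^{\rho}}\Delta u^{\rho}+H_{z}\Delta x(a)+H_{s}\Delta x(b)-\tilde{p}\bigl[g_{x^{\rho}}\Delta x^{\rho}+g_{u^{\rho}}\Delta u^{\rho}+g_{z}\Delta x(a)+g_{s}\Delta x(b)\bigr],
\]
where $\Delta$ denotes the difference between evaluations at $x$ and at $\tilde{x}$, and every partial derivative is evaluated along $(\tilde{x},\tilde{u},\tilde{p})$. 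Now joint convexity of $g$ gives that the bracket is bounded above by $g(\cdot)-g(\tilde{\cdot})=x^{\nabla}-\tilde{x}^{\nabla}$ (using \eqref{ocbc} on both trajectories), and multiplying by $\tilde{p}(t)\geq 0$ preserves the inequality; this is the decisive place where the hypothesis on the sign of $\tilde{p}$ is used. Substituting and invoking \eqref{ham2} and \eqref{ham3} to replace $H_{x^{\rho}}$ by $-\tilde{p}^{\nabla}$ and to annihilate $H_{u^{\rho}}$, I obtain
\[
f(\cdot)-f(\tilde{\cdot})\geq -\tilde{p}^{\nabla}\Delta x^{\rho}-\tilde{p}\,\Delta x^{\nabla}+H_{z}\Delta x(a)+H_{s}\Delta x(b).
\]

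I then integrate on $[a,b]$. The first two terms are exactly the nabla integration by parts from Theorem~\ref{propriedades nabla integral}, producing the boundary contribution $-[\tilde{p}(x-\tilde{x})]_{a}^{b}=\tilde{p}(a)\Delta x(a)-\tilde{p}(b)\Delta x(b)$. Since $\Delta x(a)$ and $\Delta x(b)$ are constants, the remaining integrals factor out, and after regrouping I arrive at
\[
\mathcal{L}[x,u]-\mathcal{L}[\tilde{x},\tilde{u}]\geq \Delta x(a)\Bigl[\tilde{p}(a)+\int_{a}^{b}H_{z}\nabla t\Bigr]+\Delta x(b)\Bigl[-\tilde{p}(b)+\int_{a}^{b}H_{s}\nabla t\Bigr].
\]
Finally, in each of the four scenarios (either endpoint prescribed or free), exactly one factor in each product vanishes: if $x(a)=x_{a}$ is given then $\Delta x(a)=0$, otherwise the bracket vanishes by the transversality condition \eqref{ham4}; the $x(b)$ term is handled identically using \eqref{ham5}. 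Thus the lower bound is zero, proving global minimality. The concave/maximizer case follows by reversing every convexity inequality, and the sign requirement $\tilde{p}\geq 0$ is needed again in the exact same step — this is the main technical subtlety of the argument, namely engineering the two convexity bounds so that the $\tilde{p}g$ cross-terms combine rather than cancel improperly.
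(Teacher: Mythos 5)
Your proof is correct and follows essentially the same route as the paper's: joint convexity of $f$ and $g$, substitution via the Hamiltonian system \eqref{ham1}--\eqref{ham3}, nabla integration by parts on the $\tilde{p}^{\nabla}$ term, and cancellation of the boundary terms via the transversality conditions, with $\tilde{p}\geq 0$ used exactly where you flag it. The only difference is cosmetic ordering---you invoke the convexity of $g$ pointwise before integrating, whereas the paper defers it to the final integrand---and your explicit four-case treatment of fixed versus free endpoints is if anything slightly more careful than the paper's.
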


\begin{proof}
We shall give the proof for the convex case. Since $f$ is jointly
convex in $(x^{\rho},u^{\rho},z,s)$ for any admissible pair $(x,u)$,
we have
\begin{equation*}
\begin{split}
\mathcal{L}[x,u]&-\mathcal{L}[\tilde{x},\tilde{u}]\\
&= \int_{a}^{b}\left[f(t,x^{\rho}(t),u^{\rho}(t),x(a),x(b))
-f(t,\tilde{x}^{\rho}(t),\tilde{u}^{\rho}(t),\tilde{x}(a),
\tilde{x}(b))\right]\nabla t\\
&\geq \int_{a}^{b}\Bigl[f_{x^{\rho}}(t,\tilde{x}^{\rho}(t),\tilde{u}^{\rho}(t),\tilde{x}(a),\tilde{x}(b))(x^{\rho}(t)
-\tilde{x}^{\rho}(t))\\
&\qquad\qquad  +f_{u^{\rho}}(t,\tilde{x}^{\rho}(t),\tilde{u}^{\rho}(t),\tilde{x}(a),\tilde{x}(b))(u^{\rho}(t)
-\tilde{u}^{\rho}(t))\\
&\qquad \qquad +f_{z}(t,\tilde{x}^{\rho}(t),\tilde{u}^{\rho}(t),\tilde{x}(a),\tilde{x}(b))(x(a)-\tilde{x}(a))\\
&\qquad \qquad +f_{s}(t,\tilde{x}^{\rho}(t),\tilde{u}^{\rho}(t),\tilde{x}(a),\tilde{x}(b))(x(b)-\tilde{x}(b))\Bigr]\nabla t.
\end{split}
\end{equation*}
Because the triple $(\tilde{x},\tilde{u},\tilde{p})$ satisfies
equations \eqref{ham2}--\eqref{ham5}, we obtain
\begin{equation*}
\begin{split}
\mathcal{L}[x,u] &-\mathcal{L}[\tilde{x},\tilde{u}]\\
&\geq \int_{a}^{b}\Bigl[-\tilde{p}(t)g_{x^{\rho}}(\cdots)(x^{\rho}(t)-\tilde{x}^{\rho}(t))
-(\tilde{p}(t))^{\triangle}(x^{\rho}(t)-\tilde{x}^{\rho}(t))\\
&\qquad \qquad -\tilde{p}(t)
g_{u^{\rho}}(\cdots)(u^{\rho}(t)-\tilde{u}^{\rho}(t))
-\tilde{p}(t)g_{z}(\cdots)(x(a)-\tilde{x}(a))\\
&\qquad \qquad -\tilde{p}(t)g_{s}(\cdots)(x(b)-\tilde{x}(b))\Bigr]\nabla t\\
&\qquad + \tilde{p}(b)(x(b)-\tilde{x}(b))-\tilde{p}(a)(x(a)-\tilde{x}(a)),
\end{split}
\end{equation*}
where$(\cdots)=
(t,\tilde{x}^{\rho}(t),\tilde{u}^{\rho}(t),\tilde{x}(a),\tilde{x}(b)).$
Integrating by parts the term in $(\tilde{p})^{\triangle}$  we
get
\begin{multline*}
\mathcal{L}[x,u]-\mathcal{L}[\tilde{x},\tilde{u}]\geq
\int_{a}^{b}\tilde{p}(t)\Bigl[x^{\nabla}(t)-\tilde{x}^{\nabla}(t)
-g_{x^{\rho}}(\cdots)(x^{\rho}(t)-\tilde{x}^{\rho}(t))\\
- g_{u^{\rho}}(\cdots)(u^{\rho}(t)-\tilde{u}^{\rho}(t))
-g_{z}(\cdots)(x(a)-\tilde{x}(a))-g_{s}(\cdots)(x(b)-\tilde{x}(b))\Bigr]\nabla t.
\end{multline*}
Using \eqref{ham1} we obtain
\begin{equation*}
\begin{split}
\mathcal{L}[x,u]&-\mathcal{L}[\tilde{x},\tilde{u}]\\
&\geq
\int_{a}^{b}\tilde{p}(t)\Bigl[g(t,x^{\rho}(t),u^{\rho}(t),x(a),x(b))
-g(t,\tilde{x}^{\rho}(t),\tilde{x}^{\rho}(t),\tilde{x}(a),\tilde{x}(b))\\
&\qquad\qquad\qquad -g_{x^{\rho}}(\cdots)(x^{\rho}(t)-\tilde{x}^{\rho}(t))-
g_{u^{\rho}}(\cdots)(u^{\rho}(t)-\tilde{u}^{\rho}(t))\\
&\qquad\qquad\qquad
-g_{z}(\cdots)(x(a)-\tilde{x}(a))
-g_{s}(\cdots)(x(b)-\tilde{x}(b))\Bigr]\nabla t.
\end{split}
\end{equation*}
Note that the integrand is positive due to $\tilde{p}(t)\geq 0$ for
all $t\in[a,b]$ and joint convexity of $g$ in
$(x^{\sigma},u^{\sigma},z,s)$. We conclude that
$\mathcal{L}[x,u]\geq\mathcal{L}[\tilde{x},\tilde{u}]$ for each
admissible pair $(x,u)$.
\end{proof}

\begin{Example}
Consider the problem \eqref{ham:nc:rp} in Example~\ref{ham:nc:R}.
The integrand is independent of $(x,z,s)$ and convex in $u$. The
right-hand side of the control system is linear in $(u,z,s)$ and
independent of $x$. Hence,
\begin{equation*}
\begin{split}
\tilde{x}(t)&=t^{2}\tilde{x}(1), \quad \tilde{x}(1)\in \R,\\
\tilde{u}(t)&=0
\end{split}
\end{equation*}
gives, by Theorem~\ref{sc}, the global minimum to the problem.
\end{Example}

\begin{Example}
Consider again the problem from Example~\ref{lagrangform}. Replacing
$x^{\nabla}$ by $u^{\rho}$ we can rewrite problem \eqref{ex:1} as
\begin{equation*}
 \text{minimize} \quad   \mathcal{L}[x,u]=\int_{0}^{1}\left( (u^{\rho}(t))^{2}+\alpha x^2(0)+\beta(x(1)-1)^2\right)\nabla t
\end{equation*}
subject to
$x^{\nabla}(t)=u^{\rho}(t)$.
Function $f$ is independent of $x$ and convex in $(u,z,s)$. The
right-hand side of the control system is linear in $u$ and
independent of $(x,z,s)$. Therefore, $\tilde{x}(t)=c(\alpha, \beta) t+\tilde{x}(0,\alpha, \beta)$ is, by Theorem~\ref{sc},
a global minimizer of the problem.
\end{Example}


\section*{Acknowledgments}

The authors were partially supported by the
\emph{Center for Research and Development in Mathematics and Applications}
(CIDMA) of University of Aveiro via FCT and the EC fund FEDER/POCI 2010.
ABM was also supported by Bia{\l}ystok University of Technology
via a project of the Polish Ministry of Science and Higher Education
"Wsparcie miedzynarodowej mobilnosci naukowcow";
DFMT by the Portugal--Austin project UTAustin/MAT/0057/2008.


{\small

}


\end{document}